\newcommand{\f}{\varphi}
\newcommand{\aA}{{\mathcal{A}}}
\newcommand{\bB}{{\mathcal{B}}}
\newcommand{\cC}{{\mathcal{C}}}
\newcommand{\dD}{{\mathcal{D}}}
\newcommand{\fF}{{\mathcal{F}}}
\newcommand{\oO}{{\mathcal{O}}}
\newcommand{\qQ}{{\mathcal{Q}}}
\newcommand{\rR}{{\mathcal{R}}}
\newcommand{\la}{\lambda}
\newcommand{\kk}{\mathbbm{k}}
\newcommand{\lle}{\mbox{\raisebox{0.25ex}{${\scriptscriptstyle\le}$}}}
\newcommand{\gge}{\mbox{\raisebox{0.25ex}{${\scriptscriptstyle\ge}$}}}
\newcommand{\tr}{{\mbox{$t$-struc}\-tu\-r}}
\newcommand{\bcdot}{{\mbox{\boldmath{$\cdot$}}}}
\newcommand{\Hom}{\mathop{\textrm{Hom}}\nolimits}
\newcommand{\Ext}{\mathop{\operatorname{Ext}}\nolimits}
\newcommand{\Dec}{\mathop{\operatorname{Dec}}\nolimits}
\newcommand{\End}{\mathop{\textrm{End}}\nolimits}
\newcommand{\Coh}{\mathop{\operatorname{Coh}}\nolimits}
\newcommand{\Conn}{\mathop{\textrm{Conn}}\nolimits}
\newcommand{\opp}{\mathop{\textrm{op}}\nolimits}
\newcommand{\modu}{\mathop{\textrm{mod--}}}
\newcommand{\wt}[1]{\widetilde{#1}}
\newcommand{\ol}[1]{\overline{#1}}
\newtheorem{LEM}{Lemma}[section]
\newtheorem*{THM*}{Theorem}
\newtheorem{THM}[LEM]{Theorem}
\newtheorem{PROP}[LEM]{Proposition}
\newtheorem{COR}[LEM]{Corollary}
\theoremstyle{definition}
\newtheorem{EXM}[LEM]{Example}
\newtheorem{REM}[LEM]{Remark}
\begin{document}
\title{Highest weight categories via pairs of dual exceptional sequences}

\today
	\author{Agnieszka Bodzenta}
\author{Alexey Bondal}
\date{\today}

\address{Agnieszka Bodzenta\\
	Institute of Mathematics, 
	University of Warsaw \\ Banacha 2 \\ Warsaw 02-097,
	Poland} \email{A.Bodzenta@mimuw.edu.pl}

\address{Alexey Bondal\\
	Steklov Mathematical Institute of Russian Academy of Sciences, Moscow, Russia, and \\
	Centre for Pure Mathematics, Moscow Institute of Physics and Technology, Russia, and\\
	Kavli Institute for the Physics and Mathematics of the Universe (WPI), The University of Tokyo, Kashiwa, Chiba 277-8583, Japan} 
\email{bondal@mi-ras.ru}

	\begin{abstract}
In this paper we present criteria in terms of dual pairs of exceptional sequences for an abelian category to be highest weight. The criteria are applied in three situations of geometric origin. We give new proofs for the facts that the category of perverse sheaves of middle perversity on complex-analytic manifolds with suitable conditions on the stratification is highest weight and that the derived coherent category of any Grassmannian has a t-structure with highest weight heart. Also we show that the abelian null category of any proper birational morphism of regular surfaces is highest weight. For this null category, we give a geometric description of some special objects related to the highest weight structure, such as standard, costandard and characteristic tilting objects. 
\end{abstract}
	\maketitle
	\tableofcontents
	
	\section{Introduction}
	
	Highest weight abelian categories, whose axiomatics was introduced by Cline, Parshall and Scott in \cite{CPS} are omnipresent in the Representation Theory ({\it cf.} \cite{ParSco}, \cite{DR}, \cite{Donkin1},\cite{Rou4},\cite{BrunStrop}, to mention few). In order to define a highest weight structure on an abelian category one needs to fix a (partial) order on the set $\Lambda$ of isomorphism classes of its irreducible objects. The category is highest weight if it possesses a set of so-called standard objects (defined with respect to the order on $\Lambda$) which satisfy some good conditions (see Section \ref{ssec_hwc} below).  
	
	Recently, several authors have given various new characterizations of highest weight categories. 
	In \cite{CirWoo}, Cipriani and Woolf obtained a criterion for an abelian category to be highest weight in terms of existence of a suitable stability condition on the category and a purely numerical criterion in terms of the decomposition at the level of the Grothendieck group of projective objects via standard objects. 
	
	In \cite{ACKT}, Adachi, Chan, Kimura and Tsukamoto describe highest weight structures on the category of modules over a given algebra in terms of the properties of characteristic tilting module, the central object of Ringel duality on higest weight categories. The authors of \cite{ACKT} put a consistency condition between the tilting module and simple modules over the algebra which imply that the module is characteristic tilting  for the chosen order on $\Lambda$.

	In \cite{BodBon4}, the authors of the present paper characterized highest weight categories as the left/right abelian envelopes of thin exact categories. Thin exact categories are exact categories together with a {\it thin filtration}, {\it i.e.} an admissible filtration (see \cite{BodBon4}) whose graded factors are categories of vector spaces. 
	From this perspective, thin exact categories make sense as the basic object of study in the theory of highest weight categories. The thin exact category inside the highest weight category is the full subcategory of standardly (or costandardly) filtered objects, which was originally considered by Claus Ringel  in \cite{Rin}.

	One can regard thin filtration as an exact category counterpart of the notion of exceptional sequence in triangulated category \cite{B}. Indeed, there is a clear direct connection between the two notions: the irreducible objects (with the prescribed order) in a thin category make an  exceptional sequence in its derived category, that of standard objects. 
	
	Note that the derived categories of a thin category and of the highest weight category which is its right (or left) abelian envelope, are equivalent. Thus we may consider the highest weight category as intermediate between the thin subcategory and the ambient derived category.
	It is natural to ask how to characterize highest weight categories in terms of the exceptional sequence in its derived category. The subject of this paper is to give such characterizations and to apply them to deduce that certain categories of geometric origin are highest weight.

	In the spirit of this question, Q. Liu and D. Yang asked in \cite{LiuYang}  whether an abelian category $\aA$ is highest weight if and only if $\dD^b(\aA)$ has a full exceptional sequence of objects in $\aA$. In \cite[Proposition 5.6]{Kalck} M. Kalck constructed a counterexample. We discuss it in more detail in Section \ref{ssec_Kalck}. 
	
	Recall that there is another exceptional sequence, that of costandard objects, in any highest weight category $\aA$. In \cite{BodBon4}, this sequence got the interpretation of irreducible objects in another thin exact subcategory of $\aA$, whose {\it left} abelian envelope is equal to $\aA$. It is well-known that, under the chosen order on $\Lambda$, this is also an exceptional sequence in $\dD^b(\aA)$, actually the left dual (see \cite{B} and Section \ref{ssec_dual_exc_coll}) to the sequence of standard objects ( {\it cf.} \cite[Proposition 4.19]{Rou4}).
	
	In this paper we give various characterization of highest weight structures on abelian categories in terms of the both exceptional sequences. It would be interesting to check how these results extend to (semi)infinite highest weight categories \cite{BrSt24}.
	
	In what concerns the recovering of the highest weight category $\aA$ from its derived category, it is essentially due to \cite{CPS} that $\aA$ is the heart of the \tr e glued along the filtration  induced by the exceptional sequence of co/standard objects in the derived category (see also Corollary \ref{cor_hwc_is_glued}).
	
	Our first criterion is for the heart of the \tr e glued along an exceptional sequence to be a highest weight category. Remarkably, the criterion provides the necessary and sufficient conditions purely in terms of vanishing of negative {\it Ext}-groups between the objects in the exceptional sequences. 

\begin{THM}(Theorem \ref{thm_univ_ext_tilt_gen})
	Let $\aA$ be the heart of the \tr e on a triangulated category $\dD$ glued along a full exceptional sequence $(E_1,\ldots,E_n)$  with left dual exceptional sequence $(F_n,\ldots, F_1)$. Then 
	\begin{enumerate}
		\item$\aA$ is a highest weight category with standard objects $\{E_i\}$ and costandard objects $\{F_i\}$ if and only if
		\begin{itemize}
		\item
		$\Hom_{\dD}(\bigoplus_{i=1}^n E_i, \bigoplus_{i=1}^n E_i[l])=0$, for all $l<0$.
		\item
		$\Hom_{\dD}(\bigoplus_{i=1}^n F_i, \bigoplus_{i=1}^n F_i[l])=0$, for all $l<0$. 
		\end{itemize}
		\item If $\dD$ is DG enhanced \cite{BK} and conditions of (1) are satisfied, then $\dD\cong \dD^b(\aA)$.
	\end{enumerate}
\end{THM}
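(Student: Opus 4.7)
The necessity direction is immediate: if $\aA$ is highest weight with the $E_i$ as standard objects and the $F_i$ as costandard objects, then by definition all these objects lie in $\aA$, and since $\aA$ is the heart of a $t$-structure on $\dD$, $\Hom_{\dD}(X,Y[l])=0$ for any $X,Y\in\aA$ and $l<0$, yielding both vanishings.

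For sufficiency I proceed as follows. The glued $t$-structure is constructed so that each $E_i$ lies in $\aA$, placed in cohomological degree $0$ on the semiorthogonal piece $\langle E_i\rangle\simeq\dD^b(\kk)$. Using the defining biorthogonality $\Hom_{\dD}^{\bullet}(E_i,F_j)=\delta_{ij}\,\kk[0]$ of the left dual exceptional sequence together with the hypothesis that the $F_i$'s themselves have no negative self-$\Ext$'s in $\dD$, I would show by a Beilinson--Bernstein--Deligne-style gluing calculation that each $F_j$ also belongs to $\aA$: any nontrivial positive or negative $t$-cohomology of $F_j$ would, via semiorthogonality, produce $\Ext$-classes incompatible with either the biorthogonality or the second vanishing. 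Once both sequences lie in $\aA$, they form dual exceptional sequences inside the abelian category.

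I then invoke the abelian envelope characterization from \cite{BodBon4}: the extension-closed full subcategory of $\aA$ generated by $\{E_i\}$ (respectively $\{F_j\}$), taken with the order induced from the exceptional sequence, is a thin exact subcategory of $\aA$ whose right (respectively left) abelian envelope is $\aA$ itself, which endows $\aA$ with a highest weight structure in which the $E_i$'s are standard and the $F_j$'s are costandard. The main obstacle is showing that the $F_j$'s lie in $\aA$: whereas for the $E_i$'s this is structurally built into the glued $t$-structure, for the dual sequence one must do a genuine $\Ext$-calculation along the semiorthogonal decomposition, and then the matching of orders between the dual sequences and the thin filtrations requires careful bookkeeping.

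For part (2), assuming a DG enhancement of $\dD$, the realization functor $\mathrm{real}\colon\dD^b(\aA)\to\dD$ is well-defined and $t$-exact. The $E_i$'s form a full exceptional sequence in $\dD^b(\aA)$ because they are the standard objects of a highest weight category, and $\mathrm{real}$ sends this sequence to the original full exceptional sequence in $\dD$. Since $\mathrm{real}$ is $t$-exact and restricts to an equivalence on each piece of the semiorthogonal decomposition (both being $\dD^b(\kk)$ generated by $E_i$ in degree $0$), it is an equivalence of triangulated categories, so $\dD\cong\dD^b(\aA)$.
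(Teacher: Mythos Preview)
Your outline has the right overall shape, but there is a genuine gap at the crucial step, and also a minor misconception earlier.

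First the minor point: it is \emph{not} true that ``each $E_i$ lies in $\aA$'' is structurally built into the glued $t$-structure. What is built in is only $E_i=j_{i!}(\kk)\in\dD^{\le 0}$, since $j_{i!}$ is merely right $t$-exact. Membership in $\dD^{\ge 0}$ is, by the aisle description (Lemma~\ref{lem_glued_t-str}), equivalent to $\Hom_{\dD}(E_s,E_i[l])=0$ for $l<0$, which is exactly the first vanishing hypothesis. Symmetrically, $F_j\in\dD^{\ge 0}$ comes from Hom-duality, while $F_j\in\dD^{\le 0}$ requires the second vanishing hypothesis. So both hypotheses are used already to get both sequences into $\aA$; there is no asymmetry of the kind you describe.

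The real gap is your appeal to \cite{BodBon4}. What that paper gives is that the right abelian envelope of the thin exact category $\fF(\{E_i\})$ is the Dlab--Ringel category $\wt{\aA}=\modu\End(P)$, where $P$ is the iterated universal extension. It does \emph{not} tell you that this envelope coincides with the particular heart $\aA$ of the glued $t$-structure. Knowing only that $E_i,F_i\in\aA$ does not force $\aA\simeq\wt{\aA}$: you must prove that $P$ is a projective generator of $\aA$, equivalently that the tilting $t$-structure determined by $P$ agrees with the glued one. This is precisely the inductive core of the paper's proof: one shows, using the recollement for $\dD_{n-1}\subset\dD$ and the inductive hypothesis for $Q=\bigoplus Q_i$, that $P$ is tilting, that it classically generates $\dD$, and then that $\dD^{\ge 0}_P=\dD^{\ge 0}$ by unwinding both aisles through the recollement. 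Without this comparison you have two highest weight categories ($\wt{\aA}$ from Dlab--Ringel, and whatever $\aA$ is) with the same thin subcategory, but no identification.

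For part~(2), your realization-functor strategy is plausible but would need care (existence of $\mathrm{real}$ from a DG enhancement, compatibility with the semiorthogonal filtration). The paper instead uses the tilting generator $P$ directly: idempotent completeness plus the enhancement give $\dD\simeq\mathrm{Perf}(\End_{\aA}P)$, and finite global dimension of the highest weight category $\aA$ then yields $\mathrm{Perf}(\End_{\aA}P)\simeq\dD^b(\aA)$. This avoids realization-functor technicalities and reuses the object $P$ already constructed in part~(1).
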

	
	We give an interpretation of Ringel duality for highest weight categories in terms of duality of exceptional sequences in Section \ref{Ring-dual}. Theorem \ref{thm_Ringel} claims that if a highest weight category is glued along an exceptional  sequence, then the category glued along the left dual sequence is highest weight too, and it is the Ringel dual category.
	
	Our second criterion for an abelian category to be highest weight requires that both dual exceptional sequences are in the abelian category, thus strengthening the condition of Liu and Yang. Namely, we prove
	
	\begin{THM}(Theorem \ref{thm_main})\label{thm_A}
		A highest weight structure on an abelian category $\aA$ is uniquely defined by a pair  $(\{ E_i \},  \{ F_i \})$ of finite sequences of objects in $\aA$ such that
		 $(\{ E_i \},  \{ F_i \})$ is a dual pair of full exceptional sequences in $\dD^b(\aA)$.
		 The sets $\{ E_i \}$ and $\{ F_i \}$ are respectively the sets of standard and costandard objects of the corresponding highest weight category.
		
	\end{THM}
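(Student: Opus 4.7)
The plan is to prove the two implications of the stated bijection, then note why uniqueness is automatic.

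\emph{Direction 1: highest weight $\Rightarrow$ dual pair.} First I would take $\{E_i\}$ to be the standard objects $\Delta(i)$ and $\{F_i\}$ the costandard objects $\nabla(i)$, indexed so that the order agrees with the chosen order on the poset $\Lambda$ of simples. The axioms of a highest weight category give $\End(\Delta(i))=\kk$, together with the vanishing of $\Hom^\bullet_{\dD^b(\aA)}(\Delta(i),\Delta(j))$ for $i>j$, so $\{\Delta(i)\}$ is exceptional; fullness follows because the projective generators of $\aA$ are standardly filtered and generate $\dD^b(\aA)$. The analogous argument handles $\{\nabla(i)\}$, and the characterizing identity $\Ext^l_{\aA}(\Delta(i),\nabla(j))=\delta_{ij}\delta_{l,0}\kk$ identifies $\{F_i\}$ as the left dual of $\{E_i\}$. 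These are classical facts going back to Cline--Parshall--Scott and Ringel.

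\emph{Direction 2: dual pair $\Rightarrow$ highest weight.} Given a dual pair $(\{E_i\},\{F_i\})$ with all objects in $\aA$, I would set $\aA_0$ to be the heart of the t-structure on $\dD^b(\aA)$ glued along $(E_1,\ldots,E_n)$. Since $E_i,F_i\in\aA$ lie in the standard heart, one has $\Hom_{\dD^b(\aA)}(X,Y[l])=\Ext^l_{\aA}(X,Y)=0$ for every $l<0$ and every pair of objects taken from $\{E_i\}\cup\{F_i\}$. Thus the hypotheses of Theorem~\ref{thm_univ_ext_tilt_gen}(1) are automatically satisfied, and that theorem directly gives that $\aA_0$ is highest weight with standards $\{E_i\}$ and costandards $\{F_i\}$.

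\emph{The main step: $\aA_0=\aA$.} The inclusion $\aA_0\subseteq\aA$ is the easy half: any $X\in\aA_0$ has a finite filtration (coming from the glued t-structure) whose subquotients are direct sums $E_i^{\oplus m_i}\in\aA$; since $\aA$, as the heart of a t-structure on $\dD^b(\aA)$, is closed under extensions, induction on filtration length places $X$ in $\aA$. The reverse inclusion $\aA\subseteq\aA_0$ is the main obstacle. Here my plan is to invoke Theorem~\ref{thm_univ_ext_tilt_gen}(2): the inclusion $\aA_0\hookrightarrow\dD^b(\aA)$ extends to a triangulated equivalence $\Phi\colon\dD^b(\aA_0)\simeq\dD^b(\aA)$ carrying the standard t-structure of $\dD^b(\aA_0)$ to the glued t-structure. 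One must therefore identify the glued t-structure with the standard t-structure on $\dD^b(\aA)$. The leverage for this comes from the \emph{simultaneous} presence of both $\{E_i\}$ and $\{F_i\}$ in $\aA$: computing $\Ext$-groups either way one shows that the simples $L_i$ of $\aA_0$ (the images of the canonical maps $E_i\to F_i$) lie in $\aA$ and are simple there, and then that every object of $\aA$ is an iterated extension of such $L_i$'s, hence lies in $\aA_0$.

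\emph{Uniqueness.} Uniqueness is essentially tautological: a highest weight structure is determined by its standards, costandards, and the order on $\Lambda$, all of which are encoded by the ordered pair $(\{E_i\},\{F_i\})$. Conversely, if two highest weight structures on $\aA$ produce the same pair, their standard and costandard objects agree, forcing the structures to coincide. The real content of the theorem, and the part where I expect the main work, is precisely the derived-categorical identification $\aA_0=\aA$ in the previous paragraph; everything else reduces to bookkeeping around Theorem~\ref{thm_univ_ext_tilt_gen} and classical properties of standard/costandard objects.
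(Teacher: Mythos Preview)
Your Direction 1 and your use of Theorem~\ref{thm_univ_ext_tilt_gen}(1) to make the glued heart $\aA_0$ highest weight are fine, and your inclusion $\aA_0\subseteq\aA$ is correct. The gap is in the reverse inclusion $\aA\subseteq\aA_0$. Your proposed argument is circular: you want to conclude that every object of $\aA$ is an iterated extension of the $L_i$'s, but this presupposes that $\aA$ is a finite-length category whose simples are exactly the $L_i$. That is precisely what is at stake---nothing in the hypotheses tells you in advance that $\aA$ is Deligne finite, let alone that its simples are the images of $E_i\to F_i$. Invoking Theorem~\ref{thm_univ_ext_tilt_gen}(2) does not help either: it identifies $\dD^b(\aA_0)$ with $\dD^b(\aA)$ as triangulated categories, but it carries the standard \tr e on $\dD^b(\aA_0)$ to the \emph{glued} \tr e, so you are back to comparing the two \tr es.

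The paper closes this gap with a one-line observation that you overlooked: use the explicit description of the glued aisles (Lemma~\ref{lem_glued_t-str}). For any $A\in\aA$ and any $l<0$ one has $\Hom_{\dD^b(\aA)}(A,F_i[l])=0$ and $\Hom_{\dD^b(\aA)}(E_i,A[l])=0$ simply because $A,E_i,F_i$ all lie in the standard heart; hence $A$ lies in both glued aisles, i.e.\ $A\in\aA_0$. This is exactly what Theorem~\ref{thm_general}(1) packages (via Lemma~\ref{lem_left_t-exact} and induction): the standard \tr e restricts to the glued one, so the two hearts coincide. Once you have $\aA=\aA_0$, Theorem~\ref{thm_univ_ext_tilt_gen}(1) finishes the job, and the paper's proof of Theorem~\ref{thm_main} is then literally two sentences. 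So your overall architecture is right, but replace the simples-and-finite-length argument by the direct aisle computation; the step you flagged as the ``main obstacle'' is in fact the easiest one.
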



Theorem \ref{thm_A} is a corollary of the following more general statement where we study the restriction of a \tr e to the full triangulated subcategory $\langle E_1,\ldots,E_n\rangle$ generated by an exceptional sequence $(E_1,\ldots, E_n)$.
\begin{THM}(Theorem \ref{thm_general})\label{thm_intro_gen}
Let $\aA$ be the heart of a \tr e $(\dD^{\leq 0}, \dD^{\geq 0})$ on a triangulated category $\dD$ and $(E_1,\ldots, E_n)$ an exceptional sequence in $\dD$ with left dual $(F_n,\ldots, F_1)$. 
	\begin{enumerate}
		\item If $E_i\in \dD^{\leq 0}$ and $F_i \in \dD^{\geq 0}$ for all $i$, then the \tr e on $\dD$ restricts to the  	\tr e on $\cC:= \langle E_1,\ldots,E_n\rangle$ glued along the exceptional sequence $(E_1,\ldots, E_n)$. The heart of this \tr e is $\aA \cap \cC$. 
		\item If $E_i, F_i\in \aA$, for all $i$, then $\aA\cap \cC$ is a highest weight category with standard objects $\{E_i\}$ and costandard $\{F_i\}$.
	\end{enumerate}
\end{THM}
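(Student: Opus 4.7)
My plan is to establish Part~(1) directly from the extension-closure description of the glued t-structure, and then to deduce Part~(2) by invoking Theorem~\ref{thm_univ_ext_tilt_gen} inside $\cC$. From the gluing construction (as set up earlier in the paper for an exceptional sequence and its left dual), $\cC^{\leq 0}_{\mathrm{glued}}$ is the extension closure in $\cC$ of $\{E_i[k] : k\geq 0\}$, while the dual SOD $\cC=\langle F_n,\ldots,F_1\rangle$ together with the Beilinson-type resolution of objects of $\cC$ by the $F_i$'s identifies $\cC^{\geq 0}_{\mathrm{glued}}$ with the extension closure of $\{F_i[k] : k\leq 0\}$.

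For Part~(1), the hypothesis $E_i\in\dD^{\leq 0}$ forces each generator $E_i[k]$ with $k\geq 0$ into $\dD^{\leq 0}$, and since $\dD^{\leq 0}$ is extension-closed, $\cC^{\leq 0}_{\mathrm{glued}}\subseteq\dD^{\leq 0}$. The symmetric argument from $F_i\in\dD^{\geq 0}$ gives $\cC^{\geq 0}_{\mathrm{glued}}\subseteq\dD^{\geq 0}$. For any $X\in\cC$ the triangle $\tau^{\leq 0}_{\mathrm{glued}}X\to X\to\tau^{\geq 1}_{\mathrm{glued}}X$ therefore has outer terms in $\dD^{\leq 0}$ and $\dD^{\geq 1}$ respectively, so by uniqueness of truncation in $\dD$ it coincides with the $\dD$-truncation triangle of $X$. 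In particular the truncations lie in $\cC$, so the ambient t-structure restricts. The identification $\cC^{\leq 0}_{\mathrm{glued}}=\cC\cap\dD^{\leq 0}$ needs the reverse inclusion, which follows because for $X\in\cC\cap\dD^{\leq 0}$ the structural map $X\to\tau^{\geq 1}_{\mathrm{glued}}X$ lies in $\Hom_{\dD}(\dD^{\leq 0},\dD^{\geq 1})=0$, forcing $\tau^{\geq 1}_{\mathrm{glued}}X=0$. The heart of the common t-structure is thus $\aA\cap\cC$.

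For Part~(2), the assumption $E_i,F_i\in\aA$ trivially satisfies the hypothesis of Part~(1), so $\aA\cap\cC$ is the heart of the glued t-structure on $\cC$. Applying Theorem~\ref{thm_univ_ext_tilt_gen}(1) inside $\cC$ reduces the highest weight conclusion to the two self-Ext vanishings $\Hom_\cC(\bigoplus E_i,\bigoplus E_i[l])=0$ and $\Hom_\cC(\bigoplus F_i,\bigoplus F_i[l])=0$ for $l<0$. Full faithfulness of $\cC\hookrightarrow\dD$ lets me compute these in $\dD$: since $E_i,F_i\in\aA$, for $l<0$ the shifts $E_j[l]$ and $F_j[l]$ lie in $\dD^{\geq 1}$ while $E_i,F_i\in\dD^{\leq 0}$, and both Homs vanish by the t-structure axiom. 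The theorem then yields that $\aA\cap\cC$ is highest weight with standards $\{E_i\}$ and costandards $\{F_i\}$.

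The main obstacle is the extension-closure description of $\cC^{\geq 0}_{\mathrm{glued}}$ in terms of the $F_i[k]$ with $k\leq 0$; without it, one cannot deduce $\cC^{\geq 0}_{\mathrm{glued}}\subseteq\dD^{\geq 0}$ from $F_i\in\dD^{\geq 0}$ alone, since Part~(1) does not assume $E_i\in\dD^{\geq 0}$. This description is a formal consequence of the left-dual relations $\Hom(E_i,F_j[k])=\kk\cdot\delta_{ij}\delta_{k0}$ via the Beilinson resolution and should already be in place in the paper; once it is, the rest of the argument is a formal manipulation of truncations.
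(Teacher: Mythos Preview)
Your proof is correct. Part~(2) matches the paper's argument exactly: both reduce to Theorem~\ref{thm_univ_ext_tilt_gen} via the negative-Ext vanishing forced by $E_i,F_i\in\aA$.

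For Part~(1) you take a different route. The paper proves by induction on $n$ that the inclusion $\Phi\colon\cC\to\dD$ is $t$-exact, using a recollement criterion (Lemma~\ref{lem_left_t-exact}): for a recollement $\dD_1\xrightarrow{i_*}\dD\xrightarrow{j^*}\dD_2$, a functor $\varphi$ out of $\dD$ is right $t$-exact for the glued structure if $\varphi i_*$ and $\varphi j_!$ are, and left $t$-exact if $\varphi i_*$ and $\varphi j_*$ are. The inductive step then comes down to $\Phi j_{n!}(\kk)=E_n\in\dD^{\leq 0}$ and $\Phi j_{n*}(\kk)=F_n\in\dD^{\geq 0}$. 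Your approach instead invokes the extension-closure description of the glued aisles directly. The two are essentially equivalent---the extension-closure description is itself established via the same recollement triangles $j_!j^*D\to D\to i_*i^*D$ and $i_*i^!D\to D\to j_*j^*D$ that drive Lemma~\ref{lem_left_t-exact}---but the paper packages the induction into a reusable $t$-exactness criterion while yours is more hands-on. Note that the extension-closure description is not stated verbatim in the paper (Lemma~\ref{lem_glued_t-str} gives the Hom-orthogonality form instead), so in a self-contained write-up you would have to supply it; as you say, it is routine. One small redundancy: once you know the glued truncation triangle of $X\in\cC$ coincides with its $\dD$-truncation triangle, the reverse inclusion $\cC\cap\dD^{\leq 0}\subseteq\cC^{\leq 0}_{\mathrm{glued}}$ is immediate, since $X\in\dD^{\leq 0}$ forces $\tau^{\geq 1}_{\mathrm{glued}}X=\tau^{\geq 1}_\dD X=0$ without the separate zero-map argument.
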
 

Theorem \ref{thm_A} is the case of Theorem \ref{thm_intro_gen} when $\dD = \dD^b(\aA)$ and the exceptional sequence is full.

In Section \ref{ssec_dir_alg}, we illustrate how this correspondence between highest weight structures and pairs of dual exceptional sequences works on the example of the path algebra $A$ of any directed quiver with relations. There are at least two highest weight structures on the category of $A$-modules: the standard order of vertices in the directed quiver corresponds to the pair of exceptional sequences of simple and of indecomposable injective modules, while the inverse order of vertices corresponds to sequences of indecomposable projective and of simple modules.

In the rest of the paper, we concentrate on geometric applications of our technique. First, we give a short proof of the fact that the category of perverse sheaves on a complex analytic space stratified by consecutive divisors with contractible open parts and endowed with the middle perversity is highest weight \cite{MirVil, ParSco} (see Section \ref{perverses}). Then, we give an alternative proof (Theorem \ref{thm_Gr}) that the derived category of coherent sheaves on a Grassmannian is equivalent to the derived category of a highest weight category \cite{BuchLeuVdB} , \cite{Efi} (see Section \ref{secGrass}). 

Finally, we apply Theorem \ref{thm_intro_gen} to proper birational morphisms $f\colon X\to Y$ of regular surfaces. We consider the null-category $\cC_f := \{E\in \dD^b(\Coh(X))\,|\,Rf_* E=0\}$ and its subcategory, the abelian null-category $\mathscr{A}_f := \cC_f \cap \Coh(X)$. We prove that it is highest weight. 

In order to describe standard, costandard and characteristic tilting objects in $\mathscr{A}_f$, we consider the the partially ordered set $\textrm{Conn}(f)$ of decompositions $f = h \circ g$ of $f$ into a pair $(g,h)$  of proper birational morphisms of regular surfaces such that the exceptional divisor of $g$ is connected. We put $(g_1,h_1)\preceq (g_2, h_2)$ if $g_2$ factors via $g_1$.

Let $R_g\subset X$ denote the fundamental cycle of a birational morphism $g\in \textrm{Conn}(f) $, and $D_g\subset X$ its discrepancy divisor.

\begin{THM}(Theorems \ref{thm_surfaces} and \ref{thm_tilting_for_surf})\label{thm_intro_surfaces}
	The standard \tr e on $\dD^b(\Coh(X))$ restricts to a \tr e on $\cC_f$ with heart $\mathscr{A}_f $.  Category $\mathscr{A}_f$ is highest weight with standard objects $\{\oO_{R_g}(R_g)\}_{g\in \Conn(f)}$, costandard objects $\{\oO_{R_g}(D_g)\}_{g\in \Conn(f)}$ and the characteristic tilting object  $\bigoplus_{g\in \Conn(f)} \omega_X|_{D_g}$.
	Moreover, $\cC_f\cong \dD^b(\mathscr{A}_f)$. 
\end{THM}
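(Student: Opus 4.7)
The plan is to apply Theorem~\ref{thm_intro_gen}(2) with $\dD=\dD^b(\Coh(X))$ equipped with its standard $t$-structure, heart $\aA=\Coh(X)$, and the two collections $E_g:=\oO_{R_g}(R_g)$, $F_g:=\oO_{R_g}(D_g)$, indexed by any total order on $\Conn(f)$ in which $g_1\preceq g_2$ forces $g_2$ to precede $g_1$. Three items need to be checked: (i) each $E_g$ and $F_g$ lies in $\mathscr{A}_f=\Coh(X)\cap\cC_f$; (ii) $(E_g)$ and $(F_g)$ form a dual pair of exceptional sequences in $\dD^b(\Coh(X))$; (iii) the triangulated subcategory they generate is exactly $\cC_f$. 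The final equivalence $\cC_f\cong\dD^b(\mathscr{A}_f)$ then follows from Theorem~\ref{thm_univ_ext_tilt_gen}(2), using that $\dD^b(\Coh(X))$ admits a DG enhancement.

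\textbf{Key identifications and main computation.} For each $g\in\Conn(f)$, write $f=h\circ g$ with $g\colon X\to X'$ contracting its connected exceptional divisor to a smooth point of an intermediate regular surface $X'$. The adjunction formula combined with $K_X=g^*K_{X'}+D_g$ gives
\[
\omega_X|_{R_g}\cong\oO_{R_g}(D_g)=F_g,\qquad \omega_{R_g}\cong E_g\otimes F_g,
\]
so $F_g$ is the restriction of $\omega_X$ to $R_g$. Step (i) then reduces, via the projection formula applied to $f=h\circ g$, to the vanishings $Rg_*\oO_{R_g}(R_g)=0=Rg_*F_g$, which in turn follow from the fundamental-cycle condition $R_g\cdot C\le 0$ on each exceptional component $C$ (together with the rationality of the resolved point) and, for $F_g$, from Grauert--Riemenschneider. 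For step (ii), compute $\Ext^\bullet_X(E_g,F_{g'})$ via the local-to-global spectral sequence attached to the Koszul resolution $0\to\oO_X(-R_g)\to\oO_X\to\oO_{R_g}\to 0$: the local $\Ext$-sheaves are $\oO_{R_g\cap R_{g'}}(D_{g'}-R_g)$ and $\oO_{R_g\cap R_{g'}}(D_{g'})$ in degrees $0$ and $1$, and Serre duality on the Gorenstein one-dimensional intersection reduces everything to degree counts using the adjunction identifications. For $g=g'$ this gives $\Hom^\bullet(E_g,F_g)=\kk$ concentrated in degree $0$; for $g\ne g'$ one treats separately the three cases $R_g\subsetneq R_{g'}$, $R_{g'}\subsetneq R_g$, and neither, reading off vanishings from $\preceq$. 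Exceptionality and the correct triangularity of $(E_g)$ and $(F_g)$ individually follow from analogous but simpler computations, using that $g_1\preceq g_2$ forces $R_{g_1}\subset R_{g_2}$.

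\textbf{Generation, tilting, and the main obstacle.} For step (iii), induct on the number of blow-ups in $f$: writing $f=f_1\circ g_1$ with $g_1$ a single contraction of a $(-1)$-curve $E$, the category $\cC_{g_1}$ is semisimple with unique simple $E_{g_1}=F_{g_1}=\oO_E(-1)$, and a devissage along the exceptional components using the recollement-style sequence $\cC_{g_1}\hookrightarrow\cC_f\twoheadrightarrow\cC_{f_1}$ combines with the inductive hypothesis on $\cC_{f_1}$ to show that $\cC_f$ is generated by the $E_g$. Theorem~\ref{thm_intro_gen} then supplies the restricted $t$-structure, identifies the heart as $\mathscr{A}_f$, and furnishes the highest weight structure with the claimed standard and costandard objects. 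The characteristic tilting is identified as $T=\bigoplus_{g\in\Conn(f)}\omega_X|_{D_g}$ by producing simultaneously a standard and a costandard filtration on each summand via successive restrictions of $\omega_X$ to intermediate discrepancy cycles, using short exact sequences of the form $0\to\omega_X|_{D_{g'}}\to\omega_X|_{D_g}\to\omega_X|_{D_g-D_{g'}}\to 0$ for immediate $g'\preceq g$; uniqueness of the characteristic tilting object then pins down $T$. The main technical obstacle is step (ii): the case analysis of the $\Ext$ computations and the verification that the resulting Hom-matrix is triangular in the correct direction, both of which require careful tracking of how discrepancy cycles evolve under successive point blow-ups and how intersections $R_g\cap R_{g'}$ interact with the partial order $\preceq$.
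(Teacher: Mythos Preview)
Your overall plan for the highest weight structure—verify that $(E_g)$ and $(F_g)$ form a dual pair of full exceptional sequences of sheaves in $\cC_f$, then invoke Theorem~\ref{thm_intro_gen}(2) and Theorem~\ref{thm_univ_ext_tilt_gen}(2)—is exactly the paper's. The paper, however, establishes the dual pair (Theorem~\ref{thm_exc_coll_C_f}) by a short induction via semi-orthogonal decompositions rather than by your direct $\Ext$ computations: writing $f=h\circ g$ with $h$ the blow-up of a single point of $Y$, one has $\cC_f=\langle \cC_g, Lg^*\cC_h\rangle=\langle g^!\cC_h,\cC_g\rangle$, and it suffices to check $Lg^*\oO_E(E)=\oO_{R_f}(R_f)$ and $g^!\oO_E(E)=\oO_{R_f}(D_f)$. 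This sidesteps the case analysis you flag as the main obstacle. (Incidentally, your ordering is reversed: in the paper $g_1\preceq g_2$ forces $E_{g_1}$ to \emph{precede} $E_{g_2}$, with the maximal element $f$ last.)

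For the characteristic tilting object your argument has a genuine gap, and the paper's route is entirely different. Your sequence
\[
0\to\omega_X|_{D_{g'}}\to\omega_X|_{D_g}\to\omega_X|_{D_g-D_{g'}}\to 0
\]
is indeed exact (one computes $D_g-D_{g'}=(g')^*D_b$ for the intermediate morphism $b$, so $\oO(D_g-D_{g'})$ restricts trivially to $D_{g'}$), and it does yield a \emph{costandard} filtration: when $b$ is a single blow-up the quotient is $\omega_X|_{R_g}=F_g$. But the same sequence cannot produce a \emph{standard} filtration, because its quotient is the costandard object $F_g$, not $E_g=\oO_{R_g}(R_g)$; for non-minimal $g$ one has $F_g\ncong E_g$ and $F_g\notin\fF(\{E_{g'}\})$, so the induction for a $\Delta$-filtration breaks at the very first step. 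You give no alternative sequence with standard subquotients, and the appeal to ``uniqueness of the characteristic tilting object'' would still require checking that each $\omega_X|_{D_g}$ is indecomposable. The paper avoids explicit filtrations altogether: it identifies the characteristic tilting as the injective generator of a suitable heart via \cite[Section~6.1]{BodBon5}, uses Remark~\ref{rem_dual_filt} to recognise that heart as glued along the $\Dec(f)^{\opp}$-filtration $Lg^*\cC_h\subset\cC_f$, and then invokes \cite[Proposition~5.14]{BodBon2} to identify this with the $T_f$-injective $t$-structure.
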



We show on the examples of the minimal resolutions $f: X\to Y$  of type  $A_1$ and $A_2$ surface singularities that if one omits the condition for surface $Y$ to be regular, then neither the null-category $\mathscr{A}_f$ is necessarily a highest weight category, nor $\cC_f$ is necessarily equivalent to $\dD^b(\mathscr{A}_f)$ (see Examples \ref{typeA2} and \ref{typeA1}).

\vspace{0.3cm}
\noindent
\textbf{Acknowledgements.} We are thankful to A. Efimov, A. Fonarev and A. Nordskova for useful discussions. 
 The first author was partially supported by Polish National Science Centre grant 2021/41/B/ST1/03741.  
The work of A. Bondal was performed at the Steklov International Mathematical Center and supported by the Ministry of Science and Higher Education of the Russian Federation (agreement no. 075-15-2025-303). This work was supported by JSPS KAKENHI Grant Number JP23K20205 and by Kavli IPMU (WPI, MEXT), Japan.

\vspace{0.3cm}
\noindent
\textbf{Notations.}
We work over an algebraically closed base field $\kk$. 

For objects $Y,Z$ in a $\kk$-linear triangulated category $\dD$, we denote by $\Hom_{\dD}^{\bcdot}(Y,Z)$ the graded $\kk$-vector space $\bigoplus_{l\in \mathbb{Z}}\Hom_{\dD}(Y,Z[l])$.

For a noetherian $\kk$-scheme $X$ we denote by $\dD^b(X)$ the bounded derived category of coherent sheaves on $X$. Similarly, $\dD^b(\kk)$ is the bounded derived category of finite dimensional $\kk$-vector spaces.

\section{Preliminaries}

We recall the definition of highest weight categories and their relation to standarizable sequences in abelian categories. We further discuss dual pairs of exceptional sequences and the aisles of the \tr e glued along an exceptional sequence. 

\vspace{0.3cm}
\subsection{Highest weight categories}\label{ssec_hwc}~\\

 We say that a $\kk$-linear abelian category is \emph{of finite length} if it is $\Hom$ and $\Ext^1$-finite, contains finitely many non-isomorphic irreducible objects, and any object admits a finite filtration with irreducible graded components. We say that an abelian category of finite length is a \emph{Deligne finite category} if it has a projective generator. Deligne finite categories are exactly the categories $\modu A$ of finite dimensional right modules over finite dimensional algebras. Given a projective generator $P \in \aA$, one can take $A= \End_{\aA}(P)$, then the equivalence $\aA \xrightarrow{\simeq} \modu A$ is given by the functor $\Hom_{\aA}(P, -)$.

Let $\aA$ be a Deligne finite $\kk$-linear category, and $\Lambda$ a partial order on the set of  isomorphism classes of irreducible objects in $\aA$. Let $L(\la)$ denote the irreducible object corresponding to $\la \in \Lambda$, $P(\la)$ its projective cover, and $I(\la)$ its injective hull. We assume that the order is \emph{adapted}, i.e. for any $M \in \aA$ with the top $L(\la_1)$, the socle $L(\la_2)$, and $\la_1$ and $\la_2$ not comparable in $\Lambda$, there exists $\mu $ such that  $\mu\succeq \la_1$,  $\mu \succeq \la_2$, and $L(\mu)$ is a graded factor of a filtration on $M$. We say that $(\aA, \Lambda)$ is a  \emph{highest weight category} \cite{CPS} if $\aA$ has a set, called the set of \emph{standard objects}, $\Delta(\la)$, labelled by $\la \in \Lambda$, such that 
\begin{itemize}
	\item[(st1)] there exists an epimorphism $\Delta(\la) \to L(\la)$
	whose kernel admits a filtration with graded factors $L(\mu)$, where $\mu \prec \la$,
	\item[(st2)] there exists an epimorphism $P(\la) \to \Delta(\la)$
	whose kernel admits a filtration with graded factors $\Delta(\mu)$, where $\mu \succ\la$.
\end{itemize} 

It follows from 
\cite[Theorem 4.3]{ParSco} that $\aA^{\opp}\cong (\textrm{mod-}A)^{\opp} \cong \textrm{mod-}A^{\opp}$ is also a highest weight category with the same partial order $\Lambda$ on the set of simple objects $\{L(\la)^{\opp}\}$. By transporting the standard objects from $\aA^{\opp}$ to $\aA$, we get \emph{costandard objects} $\nabla(\la)$, labelled by $\la \in \Lambda$, defined by conditions: 
\begin{itemize}
	\item[(cost1)] there exists an injective morphism $L(\la) \to \nabla(\la)$ whose cokernel admits a filtration with graded factors $L(\mu)$, where $\mu \prec \la$,
	\item[(cost2)] 
	there exists an injective morphism $\nabla(\la) \to I(\la)$ whose cokernel admits a filtration with graded factors $\nabla(\mu)$, where  $\mu \succ\la$.
\end{itemize} 

A highest weight category $\aA$ is equivalent to the category $\modu A$ where $A$ is a \emph{quasi-hereditary} algebra and \textit{vice versa} \cite{CPS}.

Consider an abelian category $\aA$. We say that highest weight structures $(\aA, \Lambda)$, $(\aA, \Lambda')$ are \emph{equivalent} if they have the same sets of isomorphism classes of standard objects $\{\Delta(\la)\}_{\la \in \Lambda} = \{\Delta(\la')\}_{\la' \in \Lambda'}$. 

Similarly, two exceptional sequences in a triangulated category are \emph{equivalent} if the sets of isomorphism classes of their objects are equal (but the full order of the sequence might change).

\vspace{0.3cm}
\subsection{Standarizable sequences}~\\

Given a set of objects $\{E_i\}	$ in an abelian category $\aA$ we denote by $\fF(\{E_i\})\subset \aA$ the extension closure of $E_i$'s, i.e. the full subcategory of objects $X\in \aA$ that admit a finite filtration $0 =X_0 \subset X_1\subset \ldots \subset X_N = X$ with graded factors $X_l/X_{l-1}$ isomorphic to one of the $E_i$'s.

Consider a sequence $\epsilon = (E_1,\ldots,E_n)$ of objects in a $\kk$-linear abelian category $\aA$ such that $\dim_{\kk}\Ext^1_{\aA}(E_i,E_j)$ are finite  for any pair $i,j \in \{1,\ldots, n\}$. The \emph{iterated universal extension} $P $ of  $\epsilon$ is defined as a sum $P = \bigoplus_{i=1}^nP_i$ recursively on the length $n$ of the sequence. For $n=1$, we put $P_1 = E_1$. For $n>1$, let $\bigoplus_{i=1}^{n-1}Q_i$ be the iterated universal extension of $(E_1,\ldots, E_{n-1})$. Then, for $1\leq i < n$, $P_i$ is the universal extension of $Q_i$ by $E_n$, and $P_n := E_n$.

Recall that given a pair $(Q,T)$ of objects in a $\kk$-linear abelian category $\cC$ such that $\dim_{\kk}\Ext_{\cC}^1(Q,T)$ is finite, the \emph{universal extension} of $Q$ by $T$ is the extension of $Q$ by $T\otimes \Ext^1_{\cC}(Q,T)^\vee$:
\begin{align}\label{univ_extens}
0 \to T\otimes \Ext^1_{\cC}(Q, T)^\vee \to R \xrightarrow{\pi} Q \to 0
\end{align}  
given by the canonical element in $\Ext_{\cC}^1(Q, T \otimes \Ext_{\cC}^1(Q,T)^\vee) \cong \Ext_{\cC}^1(Q, T) \otimes \Ext_{\cC}^1(Q, T)^\vee\simeq \End(\Ext^1_{\cC}(Q,T))$. 

The notion of an iterated universal extension of $\epsilon$ plays an important role when $\epsilon$ is a \emph{standarizable sequence}, i.e. when $ \dim_{\kk}\Hom_{\aA}(E_i,E_j)$ are finite for any pair $i,j\in \{1,\ldots, n\}$ and $\textrm{rad}(E_i,E_j) =0 = \Ext^1(E_i,E_j)$ for any pair $i\geq j$, see \cite{DR}.

\begin{THM}\cite[Theorem 2]{DR}\label{thm_DR}
	Let $\epsilon = (E_1,\ldots, E_n)$ be a standarizable sequence in a $\kk$-linear abelian category $\aA$ and $P$ the iterated universal extension of $\epsilon$. Then the category $\wt{\aA} = \textrm{mod-}\End_{\aA}(P)$ with the full order $1\leq \ldots \leq n$ on the set of irreducible objects in $\wt{\aA}$ is a highest weight category with standard objects $\Delta_i = \Hom_{\aA}(P, E_i)$. Moreover, the subcategory $\fF(\{E_i\})$ of $\aA$ and the subcategory $\fF(\{\Delta_i\})$ of $\wt{\aA}$ are equivalent.
\end{THM}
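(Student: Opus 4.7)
The plan is to prove the theorem by induction on the length $n$ of the sequence, establishing simultaneously the key structural properties of the iterated universal extension $P$, and then extracting the highest weight structure on $\wt{\aA}$ via a Morita-type argument.

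First, I would establish two interlocking facts about each summand $P_i$. \emph{Filtration:} unwinding the recursion and inducting on $n$, each $P_i$ admits a finite filtration with graded factors $E_j$, $j\geq i$, and top $E_i$; in particular $P_i\in \fF(\{E_i\})$. \emph{Ext-vanishing:} $\Ext^1_\aA(P_i,E_j)=0$ for all pairs $i,j$. When $j<i$ the filtration reduces this to $\Ext^1_\aA(E_k,E_j)=0$ for $k\geq i>j$, which is the standarizability hypothesis. When $j\geq i$ I proceed inductively on $n$: for $j<n$ the long exact sequence from $0\to E_n\otimes V\to P_i\to Q_i\to 0$ (with $V=\Ext^1_\aA(Q_i,E_n)^{\vee}$), combined with $\Ext^1_\aA(E_n,E_j)=0$ and the inductive vanishing $\Ext^1_\aA(Q_i,E_j)=0$, gives the claim; for $j=n$, the universal-extension construction is designed precisely so that the connecting map $\Hom_\aA(E_n\otimes V,E_n)\to\Ext^1_\aA(Q_i,E_n)$ is surjective (it is the identity under the canonical identification $\Hom_\aA(E_n\otimes V,E_n)=V^{\vee}=\Ext^1_\aA(Q_i,E_n)$), while $\Ext^1_\aA(E_n,E_n)=0$ eliminates the remaining term. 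Consequently $\Ext^1_\aA(P,Y)=0$ for every $Y\in\fF(\{E_i\})$, so $H:=\Hom_\aA(P,-)$ is exact on $\fF(\{E_i\})$.

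Second, I would carry out the parallel Hom-computation and extract the highest weight structure on $\wt{\aA}$. Standarizability forces $\Hom_\aA(E_k,E_i)=0$ for $k>i$ (since $\mathrm{rad}(E_k,E_i)=0$ and the $E_l$ are pairwise non-isomorphic), and devissage along the filtration gives $\Hom_\aA(P_k,E_i)=0$ for $k>i$ together with $\Hom_\aA(P_i,E_i)=\kk$. Thus $\End_\aA(P_i)=\kk$, each $P_i$ is indecomposable, and the $P_i$ are pairwise non-isomorphic. Writing $A=\End_\aA(P)$ with primitive idempotents $e_i$ projecting onto the $P_i$-summand, the $A$-modules $\wt{P}_i=e_iA=H(P_i)$ are the indecomposable projectives of $\wt{\aA}$ with pairwise non-isomorphic simple tops $L_i$. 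Applying the exact functor $H$ to the filtration of $P_i$ yields a filtration of $\wt{P}_i$ by $\Delta_j=H(E_j)$, $j\geq i$, with $\Delta_i$ on top --- this is axiom (st2). For (st1), the identity $[\Delta_i:L_k]=\dim_\kk e_k\Delta_i=\dim_\kk \Hom_\aA(P_k,E_i)$ combined with the Hom-computations forces $[\Delta_i:L_i]=1$ and $[\Delta_i:L_k]=0$ for $k>i$, so $L_i$ is the simple top of $\Delta_i$ and the kernel of $\Delta_i\to L_i$ has composition factors only among the $L_j$ with $j<i$; adaptedness of the order follows formally.

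Finally, I would prove the equivalence $\fF(\{E_i\})\simeq\fF(\{\Delta_i\})$: the restriction of $H$ is exact and sends $E_i\mapsto \Delta_i$, hence factors through $\fF(\{\Delta_i\})$. To upgrade this to an equivalence I would argue that $P$ is a relative projective generator of $\fF(\{E_i\})$: every object admits a presentation by a direct sum of $P_i$'s, constructed inductively on filtration length by lifting the projection onto its top through $\bigoplus_k P_k\to \bigoplus_k E_k$, which is possible thanks to the Ext-vanishing of the first step. A standard Morita-type argument applied to such a presentation then upgrades the tautological identification $\End_\aA(P)\simeq A$ to full faithfulness of $H$ on $\fF(\{E_i\})$. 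The main obstacle I anticipate is the essential surjectivity onto $\fF(\{\Delta_i\})$: this reduces to showing that the natural comparison $\Ext^1_\aA(E_i,E_j)\to \Ext^1_A(\Delta_i,\Delta_j)$ is bijective, which one obtains by setting up two-term presentations by summands of $P$ on both sides and matching their cokernels using the Ext-vanishing of the first step.
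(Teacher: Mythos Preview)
The paper does not give its own proof of this statement; it is quoted from Dlab--Ringel \cite[Theorem~2]{DR} and used as a black box. There is therefore nothing in the paper to compare your argument against.

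Your sketch is essentially the original Dlab--Ringel argument, and its overall architecture is correct. One genuine slip: the assertion $\End_\aA(P_i)=\kk$ does not follow from $\Hom_\aA(P_k,E_i)=0$ for $k>i$ and $\Hom_\aA(P_i,E_i)=\kk$, and is in fact false in general. Already for $n=2$ with $\Hom_\aA(E_1,E_2)\neq 0$ and $\Ext^1_\aA(E_1,E_2)\neq 0$, applying $\Hom_\aA(P_1,-)$ to $0\to E_2\to P_1\to E_1\to 0$ and using your Ext-vanishing gives $\dim_\kk\End_\aA(P_1)\geq 2$. What your Hom-computation \emph{does} yield is that $\End_\aA(P_i)$ is local: any endomorphism $\phi$ composes with the projection $P_i\twoheadrightarrow E_i$ to a scalar multiple of that projection, so $\phi-\lambda\,\mathrm{id}$ factors through $K_i=\ker(P_i\to E_i)$; since $K_i$ is filtered by $E_l$ with $l>i$, iterating shows such maps are nilpotent. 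Locality of $\End_\aA(P_i)$ is all you need for indecomposability of $P_i$ (hence primitivity of $e_i$) and for the pairwise non-isomorphism of the $P_i$ via the distinct tops $E_i$; the rest of your argument then goes through unchanged.
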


It is shown in \cite{BodBon4} that the abelian category $\wt{\aA}$ can be intrinsically reconstructed from the exact category $\fF(\{E_i\})$  as its right abelian envelope.

\vspace{0.3cm}
\subsection{Pairs of dual exceptional sequences in highest weight categories}\label{ssec_dual_exc_coll}~\\

 For an exceptional pair $(E, F )$ in a triangulated category $\dD$, the \emph{left and right mutation} are defined via \emph{canonical} exact triangles:
\begin{align*}
&\Hom_{\dD}^\bcdot(E,F) \otimes E \to F \to L_EF,& &R_FE \to E \to F \otimes \Hom_{\dD}^\bcdot(E,F)^\vee.&
\end{align*}
Note the shift with respect to the original definition in \cite{B}. 

Mutations define a braid group action on the set of exceptional sequences in a triangulated category $\dD$ \cite{B}. In particular, given an exceptional sequence $(E_1,\ldots, E_n)$, its \emph{left dual} exceptional sequence can be canonically defined as the sequence $(F_n,\ldots,F_1)$ with $F_i =L_{E_1}\ldots L_{E_{i-2}} L_{E_{i-1}} E_i$, for any $i\geq 2$, and $F_1 = E_1$. 

In the case the exceptional sequence is {\it full}, {\it i.e.} generates the triangulated category $\dD$, the left dual exceptional sequence is uniquely characterised by the Hom-duality property 
\begin{equation}\label{eqtn_dual_collec}
\Hom_{\dD}(E_i,F_j[l]) = \left\{\begin{array}{cl}\kk, & \textrm{if }l=0, i=j,\\0, & \textrm{otherwise.} \end{array} \right. 
\end{equation} 

The following fact is well-known (\emph{cf.} \cite[Proposition 4.19]{Rou4}):
\begin{LEM}\label{lem_two_exc_coll}
	Let $(\aA, \Lambda)$ be a highest weight category. Then, for any choice of a full order $\la_1 \leq \ldots \leq \la_n$ on $\Lambda$ compatible with the poset structure, the
	sequence $(\Delta(\lambda_1),\ldots, \Delta(\lambda_n) )$ is full exceptional in $\dD^b(\aA)$ with the left dual sequence $(\nabla(\lambda_n),\ldots, \nabla(\lambda_1) )$.
\end{LEM}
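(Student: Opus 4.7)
The plan is to check three things: (a) $(\Delta(\la_1),\ldots,\Delta(\la_n))$ is an exceptional sequence in $\dD^b(\aA)$; (b) it is full; and (c) $(\nabla(\la_n),\ldots,\nabla(\la_1))$ satisfies the Hom-duality (\ref{eqtn_dual_collec}) with it. Uniqueness of the left dual to a full exceptional sequence then automatically identifies $(\nabla(\la_n),\ldots,\nabla(\la_1))$ as the left dual sequence.

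The technical backbone I would rely on comes in two pieces. First, iterating (st2) produces, for each $\la\in\Lambda$, a projective resolution $P_\bullet\to\Delta(\la)$ with $P_0=P(\la)$ and, for $l\ge 1$, $P_l$ a direct sum of $P(\mu)$'s with $\mu\succ\la$; finiteness of $\Lambda$ ensures that this resolution is bounded. Dually, iterating (cost2) yields an injective resolution $\nabla(\la)\to I_\bullet$ with $I_0=I(\la)$ and, for $l\ge 1$, $I_l$ a direct sum of $I(\nu)$'s with $\nu\succ\la$. Second, using projectivity of $P(\mu)$ and additivity of composition multiplicities across short exact sequences, one has $\dim_{\kk}\Hom_{\aA}(P(\mu),M)=[M:L(\mu)]$ for every $M\in\aA$, and dually $\dim_{\kk}\Hom_{\aA}(M,I(\nu))=[M:L(\nu)]$. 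All $\Ext$-groups below are read off by combining these formulas with the two resolutions.

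For (a), I compute $\Ext^l_{\aA}(\Delta(\la_j),\Delta(\la_i))$ by applying $\Hom_{\aA}(-,\Delta(\la_i))$ to the projective resolution of $\Delta(\la_j)$. A term $P(\mu)$ with $\mu\succeq\la_j$ contributes $[\Delta(\la_i):L(\mu)]$, which is nonzero only when $\mu\preceq\la_i$; together with $\mu\succeq\la_j$, this would force $\la_j\preceq\la_i$, contradicting compatibility of the full order with $\Lambda$ whenever $j>i$. Hence $\Ext^l_{\aA}(\Delta(\la_j),\Delta(\la_i))=0$ for all $l\ge 0$ when $j>i$, and the same computation at $j=i$ yields $\End_{\aA}(\Delta(\la_i))=\kk$ together with vanishing of all higher self-$\Ext$'s. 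For (b), the category $\aA$ is of finite length, so the simples $\{L(\la)\}_{\la\in\Lambda}$ generate $\dD^b(\aA)$ as a triangulated subcategory; using (st1), each $L(\la)$ is an extension of $\Delta(\la)$ by an object filtered by $L(\mu)$ with $\mu\prec\la$, and a straightforward induction on $\la$ (starting from the minimal elements, where $\Delta(\la)=L(\la)$) places every simple into $\langle\Delta(\la_1),\ldots,\Delta(\la_n)\rangle$.

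For (c), the vanishing of $\Hom_{\aA}(\Delta(\la_i),\nabla(\la_j))$ for $i\ne j$ is a socle/top argument: the image of such a map has top $L(\la_i)$ (as a quotient of $\Delta(\la_i)$) and, if nonzero, socle $L(\la_j)$ (as a subobject of $\nabla(\la_j)$), so $L(\la_j)$ is a composition factor of $\Delta(\la_i)$ and $L(\la_i)$ a composition factor of $\nabla(\la_j)$, forcing $\la_j\preceq\la_i\preceq\la_j$, a contradiction. The diagonal $\kk$-dimensional Hom is exhibited by the composition $\Delta(\la)\twoheadrightarrow L(\la)\hookrightarrow\nabla(\la)$. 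For the $\Ext^l$-vanishing with $l\ge 1$, I use the projective resolution of $\Delta(\la_i)$ when $\la_i\not\prec\la_j$ (the constraint $\la_i\prec\mu\preceq\la_j$ then has no solution) and the injective resolution of $\nabla(\la_j)$ when $\la_j\not\prec\la_i$ (the constraint $\la_j\prec\nu\preceq\la_i$ has none); since $\la_i\prec\la_j$ and $\la_j\prec\la_i$ cannot hold simultaneously, at least one of the two resolutions always applies. The main subtlety, where I expect most of the bookkeeping, is precisely this order-case-analysis: one must choose between the two resolutions according to the relative position of $\la_i$ and $\la_j$, and only their combined use delivers the $\Ext$-vanishing uniformly.
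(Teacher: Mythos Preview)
The paper does not give its own proof of this lemma; it simply records it as well-known with a reference to \cite[Proposition 4.19]{Rou4}. Your argument therefore supplies details the paper omits, and the three-step plan (exceptionality, fullness, Hom-duality) together with the characterisation \eqref{eqtn_dual_collec} of the left dual of a full sequence is exactly the right strategy.

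Your proof is essentially correct. One small point deserves to be made explicit: for the diagonal term you exhibit a nonzero map $\Delta(\la)\twoheadrightarrow L(\la)\hookrightarrow\nabla(\la)$, but you do not argue that $\dim_{\kk}\Hom_{\aA}(\Delta(\la),\nabla(\la))\le 1$. This follows from the very same projective-resolution computation you already use for $\End_{\aA}(\Delta(\la))$: since $\Hom_{\aA}(P(\mu),\nabla(\la))=[\nabla(\la):L(\mu)]$ vanishes for $\mu\succ\la$, the complex $\Hom_{\aA}(P_\bullet,\nabla(\la))$ is concentrated in degree $0$, where it equals $\Hom_{\aA}(P(\la),\nabla(\la))=[\nabla(\la):L(\la)]\cdot\kk=\kk$ by (cost1). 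Alternatively, the socle/top argument extends: the radical of $\Delta(\la)$ has only composition factors $L(\mu)$ with $\mu\prec\la$, so it maps to zero in $\nabla(\la)$ (whose socle is $L(\la)$), and every map factors through $L(\la)$.

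A second, more cosmetic, point: the existence of the bounded projective resolution of $\Delta(\la)$ with $P_0=P(\la)$ and $P_l$ for $l\ge1$ a sum of $P(\mu)$'s with $\mu\succ\la$ is correct, but the justification ``finiteness of $\Lambda$'' hides a short downward induction on $\la$ (maximal $\la$ gives $P(\la)=\Delta(\la)$; for general $\la$ apply the horseshoe lemma to the $\Delta$-filtration of $\ker(P(\la)\to\Delta(\la))$ granted by (st2) and use the inductive hypothesis). With these two clarifications your proof is complete.
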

\begin{REM}\label{rem_hws_by_st}
	Consider a highest weight category $(\aA, \Lambda)$ and the full exceptional sequence $(\Delta(\lambda_1),\ldots, \Delta(\lambda_n) )$ as above.
	Then, for the full order  $\Lambda' = \{\la_1\leq \ldots \leq \la_n\}$ on the set 
	of irreducible objects in $\aA$, the highest weight structure $(\aA, \Lambda')$ is equivalent to the original one $(\aA, \Lambda)$, as it is for any refinement of the partial order on $\Lambda$.
\end{REM}
\vspace{0.3cm}
\subsection{$t$-structures glued along exceptional sequences}~\\

By a \emph{$t$-category} we mean  a pair of a triangulated category $\dD$ and a \tr e on it \cite{BBD}.

Let $(E_1,\ldots, E_n)$ be a full exceptional sequence  in a triangulated category $\dD$. We denote by $\dD_s :=\langle E_1,\ldots, E_s\rangle$ the full subcategory generated by the first $s$ objects of the sequence and consider the \tr e 
on $\dD$ glued via the admissible filtration
\begin{equation}\label{eqtn_filtration}
\dD_1 \subset \dD_2 \subset \ldots \subset \dD_{n-1}\subset \dD 
\end{equation}
from the standard \tr es on $\dD_i/\dD_{i-1}  \cong \dD^b(\kk)$ (see \cite[Theorem 1.7]{BodBon2}). We call this \tr e {\it glued along the exceptional sequence} $(E_1,\ldots, E_n)$. By \cite[Proposition 2]{Bez1}, the heart $\aA$ of the glued \tr e is a finite length abelian category. 

Let $( F_n,\ldots, F_1)$ be the exceptional  sequence left dual to $(E_1,\ldots, E_n)$. One can describe elements of the sequence and its dual by means of the functors in the  recollements 
\begin{equation}\label{eqtn_recol_l}
\xymatrix{\dD_{s-1} \ar[r]|(0.55){i_{s*}} & \dD_{s} \ar[r]|(0.45){j_s^*} \ar@<2ex>[l]|(0.45){i_s^!} \ar@<-2ex>[l]|(0.45){i_s^*} & \dD^b(k) \ar@<2ex>[l]|(0.55){{j_s}_*} \ar@<-2ex>[l]|(0.55){{j_s}_!}}
\end{equation} 
as $F_s = j_{s*}(\kk) $ and $E_s =j_{s!}(\kk) $.

\begin{LEM}\label{lem_glued_t-str}
	Let $(E_1, \ldots, E_n ) $ be a full exceptional sequence in a triangulated category $\dD$ and $( F_n, \ldots, F_1 )$ its left dual. The \tr e on $\dD$ glued along $(E_1, \ldots, E_n ) $
	is determined by the equalities:
	\begin{align*}
	\dD^{\lle 0} = \{D\in \dD\,|\, \Hom_{\dD}(D, F_s[l])=0, \textrm{ for all } l<0
	\},\\
	\dD^{\gge 0} = \{D\in \dD\,|\, \Hom_{\dD}(E_s, D[l])=0, \textrm{ for all } l<0
	\}.
	\end{align*}
\end{LEM}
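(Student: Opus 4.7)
The plan is to prove both equalities by induction on $s$, using the tower of recollements \eqref{eqtn_recol_l} that defines the glued \tr e. By definition, the \tr e on $\dD = \dD_n$ is built step by step from the standard \tr e on $\dD^b(\kk)$ at each stage $\dD_s / \dD_{s-1}$, so the BBD gluing formula gives
\[
\dD_s^{\lle 0} = \{D \in \dD_s \mid i_s^* D \in \dD_{s-1}^{\lle 0},\ j_s^* D \in \dD^b(\kk)^{\lle 0}\},
\]
and analogously for $\dD_s^{\gge 0}$ with $i_s^!$ replacing $i_s^*$. The idea is to translate each of the two local conditions into the corresponding Hom-vanishing via the recollement adjunctions, and then iterate.

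First I would handle $\dD^{\lle 0}$. The base case $s=1$ is immediate since $\dD_1 = \langle E_1\rangle \cong \dD^b(\kk)$ and $F_1 = E_1 = \kk$, with the standard \tr e on $\dD^b(\kk)$ characterized by $V \in \dD^b(\kk)^{\lle 0}$ iff $\Hom(V,\kk[l]) = 0$ for $l<0$ (this follows from the splitting $V \cong \bigoplus H^i(V)[-i]$ over a field). For the inductive step, I apply the adjunction $j_s^* \dashv j_{s*}$ to rewrite
\[
j_s^* D \in \dD^b(\kk)^{\lle 0} \iff \Hom(j_s^*D, \kk[l])=0\ \text{for }l<0 \iff \Hom(D, F_s[l])=0\ \text{for }l<0,
\]
using $F_s = j_{s*}\kk$. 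The adjunction $i_s^* \dashv i_{s*}$ together with the inductive hypothesis on $\dD_{s-1}$ gives
\[
i_s^* D \in \dD_{s-1}^{\lle 0} \iff \Hom(i_s^*D, F_t[l])=0\ \forall t<s,\ l<0 \iff \Hom(D, F_t[l])=0\ \forall t<s,\ l<0,
\]
since $F_t \in \dD_t \subset \dD_{s-1}$ and hence $i_{s*}F_t = F_t$. Combining these two equivalences closes the induction.

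The argument for $\dD^{\gge 0}$ is symmetric, using the other pair of adjoints. From $j_{s!} \dashv j_s^*$ and $E_s = j_{s!}\kk$, one gets $j_s^* D \in \dD^b(\kk)^{\gge 0}$ iff $\Hom(E_s, D[l])=0$ for $l<0$. From $i_{s*} \dashv i_s^!$ and the fact that $E_t \in \dD_{s-1}$ for $t<s$, one gets $i_s^! D \in \dD_{s-1}^{\gge 0}$ iff $\Hom(E_t, D[l])=0$ for all $t<s$ and $l<0$, by the inductive hypothesis. Combining yields the claimed description of $\dD^{\gge 0}$.

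The only real care-point is the bookkeeping needed to see that, at each stage, $E_t$ and $F_t$ for $t < s$ genuinely live in $\dD_{s-1}$, so that the fully faithful inclusion $i_{s*}$ leaves them unchanged and the adjunctions transport the Hom vanishing from $\dD_{s-1}$ to $\dD_s$ cleanly. This is built into the construction of the left dual sequence via iterated mutations along $\dD_{t}$, so no subtle issue arises; the content of the lemma is really the two adjunction identifications and the characterization of the standard \tr e on $\dD^b(\kk)$ in terms of Hom-vanishing against $\kk$.
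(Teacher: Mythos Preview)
Your proof is correct and follows essentially the same approach as the paper: induction on the length of the exceptional sequence, using the BBD gluing formula together with the recollement adjunctions to translate the conditions $j_s^*D \in \dD^b(\kk)^{\lle 0}$ (resp.\ $\gge 0$) and $i_s^*D \in \dD_{s-1}^{\lle 0}$ (resp.\ $i_s^!D \in \dD_{s-1}^{\gge 0}$) into the desired Hom-vanishing. The paper phrases the key step as the identification $j_n^*(-) \cong \Hom^\bcdot_{\dD}(-,F_n)^\vee \cong \Hom^\bcdot_{\dD}(E_n,-)$, which is exactly your adjunction argument unpacked.
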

\begin{proof}
	We proceed by induction on the length $n$ of the exceptional sequence. The case $n=1$ is clear.
	
	Functor $j_n^*$ in recollement (\ref{eqtn_recol_l}) for $s=n$ is right adjoint to the embedding of ${j_n}_!(\kk) = E_n$ and left adjoint to the embedding of ${j_n}_*(\kk) = F_n$. Hence, $j_n^*(-) \cong \Hom^\bcdot_{\dD}(-,F_n)^\vee \cong \Hom^\bcdot_{\dD}(E_n,-)$. Indeed, an inclusion $a_*\colon \dD^b(\kk) \to \dD$ of an exceptional object $E\in \dD$ admits the left adjoint $a^*(-) = \Hom^\bcdot_{\dD}(-,E)^\vee$ and the right adjoint $a^!(-) = \Hom^\bcdot_{\dD}(E,-)$. Then, the standard description of the glued \tr e (see \cite[Theorem 1.4.10]{BBD}) and the induction hypothesis give:
	\begin{align*}
	\dD^{\leq 0} &= \{D\in \dD\,|\, i_n^*(D)\in \dD_{s-1}^{\leq 0}, j_n^*(D) \in \dD^b(\kk)^{\leq 0} \} \\
	&= \{D\in \dD\,|\, \Hom_{\dD}^{\bcdot}(i_n^*(D),F_s)\in \dD^b(\kk)^{\geq 0}, \textrm{ for} 1\leq s<n, \Hom^\bcdot_{\dD}(D, F_n)^\vee  \in \dD^b(\kk)^{\leq 0} \} \\
	&= \{D\in \dD\,|\, \Hom_{\dD}^{\bcdot}(D, F_s)\in \dD^b(\kk)^{\geq 0} \textrm{ for}1\leq s\leq n\}\\
	\dD^{\geq 0} &= \{D\in \dD\,|\, i_n^!(D)\in\dD_{s-1}^{\geq 0}, j_n^*(D) \in \dD^b(\kk)^{\geq 0} \}\\
	&= \{D\in \dD\,|\, \Hom_{\dD}^{\bcdot}(E_s,i_n^!(D))\in \dD^b(\kk)^{\geq 0}, \textrm{ for } 1\leq s<n, \Hom^\bcdot_{\dD}(E_n,D)\in \dD^b(\kk)^{\geq 0} \}\\
	& = \{D\in \dD \,|\, \Hom_{\dD}^{\bcdot}(E_s, D)\in \dD^b(\kk)^{\geq 0} \textrm{ for }1\leq s\leq n\}.\qedhere
	\end{align*}
\end{proof}

\begin{COR}\label{cor_dual_heart}
	Let $\aA$ be the heart of the \tr e on $\dD$ glued along a full exceptional sequence.
	Then $\aA^{\opp}$ is equivalent to the heart of the \tr e on $\dD^{\opp}$ glued along the left dual exceptional sequence $(F_1, \ldots, F_n)$ (mind the order in the opposiite category!).
\end{COR}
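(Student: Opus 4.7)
The plan is to apply Lemma~\ref{lem_glued_t-str} twice, once in $\dD$ and once in $\dD^{\opp}$, and observe that passing to the opposite category merely swaps the two aisles of the glued $t$-structure. First I would unwind the conventions: using $\Hom_{\dD^{\opp}}(X,Y)=\Hom_{\dD}(Y,X)$ and $[1]_{\dD^{\opp}}=[-1]_{\dD}$, the reverse of any full exceptional sequence in $\dD$ is a full exceptional sequence in $\dD^{\opp}$, so $(F_1,\ldots, F_n)$ is indeed full exceptional in $\dD^{\opp}$ (since $(F_n,\ldots, F_1)$ is full exceptional in $\dD$).

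The key computation is to identify the left dual of $(F_1,\ldots, F_n)$ inside $\dD^{\opp}$. Writing the defining Hom-duality $\Hom_{\dD^{\opp}}(F_i, G_j[l]_{\dD^{\opp}})=\kk\,\delta_{ij}\delta_{l0}$ and translating it to $\dD$ via the shift convention yields $\Hom_{\dD}(G_j, F_i[l])=\kk\,\delta_{ij}\delta_{l0}$. Comparing this with (\ref{eqtn_dual_collec}) for the original pair, uniqueness of the dual in the full case forces $G_j=E_j$, so the left dual of $(F_1,\ldots, F_n)$ in $\dD^{\opp}$ is the reversed sequence $(E_n,\ldots, E_1)$.

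Finally, I would apply Lemma~\ref{lem_glued_t-str} to the glued $t$-structure on $\dD^{\opp}$ and rewrite each $\Hom_{\dD^{\opp}}(X, Y[l]_{\dD^{\opp}})$ as $\Hom_{\dD}(Y, X[-l])$. The two aisle conditions then become $\Hom_{\dD}(E_s, D[k])=0$ for all $k<0$, respectively $\Hom_{\dD}(D, F_s[k])=0$ for all $k<0$, so by Lemma~\ref{lem_glued_t-str} applied in $\dD$ they cut out exactly $\dD^{\gge 0}$ and $\dD^{\lle 0}$ respectively. Hence the heart of the glued $t$-structure on $\dD^{\opp}$ agrees, as a collection of objects, with $\dD^{\gge 0}\cap \dD^{\lle 0}=\aA$, but carries the reversed morphisms, which is precisely $\aA^{\opp}$. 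The whole argument is formal once Lemma~\ref{lem_glued_t-str} is in hand; the only delicate point will be keeping the bookkeeping of shifts and of the direction of arrows consistent when moving between $\dD$ and $\dD^{\opp}$.
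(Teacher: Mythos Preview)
Your proposal is correct and follows essentially the same approach as the paper: identify $(E_n,\ldots,E_1)$ as the left dual of $(F_1,\ldots,F_n)$ in $\dD^{\opp}$, then apply Lemma~\ref{lem_glued_t-str} on both sides to see that the aisles swap. The paper's proof is more terse, but the content is identical.
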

\begin{proof}
	If $(E_1, \ldots, E_n)$ is the original exceptional sequence and $(F_n, \ldots, F_1)$ its left dual, then,  in $\dD^{\opp}$, the sequence $(E_n, \ldots, E_1)$ is left dual to $(F_1, \ldots, F_n)$. Lemma \ref{lem_glued_t-str} implies that, for the glued \tr es on $\dD$ and $\dD^{\opp}$, we have: $(\dD^{\opp})^{\leq 0} \simeq (\dD^{\geq 0})^{\opp}$ and $(\dD^{\opp})^{\geq 0} \simeq (\dD^{\leq 0})^{\opp}$, hence the corollary. 
\end{proof}

\section{Highest weight categories via exceptional sequences}\label{sec_hwc_via_ex_col}

We obtain equivalent conditions for the heart of a \tr e glued along an exceptional sequence to be a highest weight category. We apply this to interpret Ringel duality for highest weight categories in terms of duality for exceptional sequences.  

Then we consider an exceptional sequence in a \textit{t}-category and give conditions on the sequence and its left dual under which the \tr e  restricts to the subcategory generated by the sequence. Under further conditions  the heart of the restricted \tr e  is proved to be highest weight. We conclude with a bijection of equivalence classes of highest weight structures on an abelian category $\aA$ and dual pairs of full exceptional sequences in $\dD^b(\aA)$ both consisting of objects in $\aA$. 

\vspace{0.3cm}
\subsection{A criterion for the heart of the glued \tr e to be highest weight}~\\

First, we consider a full exceptional sequence in a triangulated category and give necessary and sufficient conditions for the heart of the \tr e glued along it to be a highest weight category in terms of the vanishing of negative Homs in the sequence and in its left dual one.

\begin{THM}\label{thm_univ_ext_tilt_gen}
	Let $\epsilon =(E_1,\ldots,E_n)$ be a full exceptional sequence in  $\dD$, $(F_n,\ldots,F_1)$ its left dual sequence, and $\aA$ the heart of the \tr e glued along $(E_1,\dots ,E_n)$.
	Then
	\begin{enumerate}
		\item $\aA$ has a structure of the highest weight category with standard objects $\{E_i\}$, costandard $\{F_i\}$, and the order on irreducible objects induced by $\epsilon$,
		if and only if 
		\begin{itemize}
		\item
		$\Hom_{\dD}(\bigoplus_{i=1}^n E_i, \bigoplus_{i=1}^n E_i[l])=0$, for all $l<0$.
		\item
		$\Hom_{\dD}(\bigoplus_{i=1}^n F_i, \bigoplus_{i=1}^n F_i[l])=0$, for all $l<0$. 
		\end{itemize}
		\item If $\dD$ is idempotent complete and DG enhanced \cite{BK} and conditions of (1) are satisfied, then $\dD\cong \dD^b(\aA)$.
	\end{enumerate}
\end{THM}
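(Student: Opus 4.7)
For the forward direction, the plan is just to observe that standard and costandard objects all lie in $\aA$ by definition; for any $X, Y$ in the heart of a $t$-structure and $l < 0$ one has $X \in \dD^{\lle 0}$ and $Y[l] \in \dD^{\gge 1}$, so $\Hom_\dD(X, Y[l]) = 0$. Applying this with $X = Y = \bigoplus_i E_i$ and $X = Y = \bigoplus_i F_i$ gives both vanishing conditions at once.

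For the converse, I would first deduce $E_i, F_i \in \aA$ from Lemma \ref{lem_glued_t-str}: the duality \eqref{eqtn_dual_collec} gives $\Hom_\dD(E_i, F_s[l]) = 0 = \Hom_\dD(E_s, F_i[l])$ for $l \neq 0$, and the two hypotheses supply the remaining vanishings $\Hom_\dD(E_s, E_i[l]) = 0$ and $\Hom_\dD(F_i, F_s[l]) = 0$ for $l < 0$. Next, using the identification $\Ext^1_\aA(X, Y) = \Hom_\dD(X, Y[1])$ for objects of a heart, exceptionality of $(E_1, \ldots, E_n)$ yields that this sequence is standardizable in $\aA$: finite-dimensional $\Hom$s, $\End_\aA(E_i) = \kk$, and vanishing of radicals and of $\Ext^1_\aA(E_i, E_j)$ for $i \geq j$. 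Theorem \ref{thm_DR} then produces a highest weight category $\wt{\aA} = \modu \End_\aA(P)$ with standard objects $\Delta_i = \Hom_\aA(P, E_i)$ and an equivalence $\fF_\aA(\{E_i\}) \simeq \fF_{\wt{\aA}}(\{\Delta_i\})$ of extension-closures.

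The main obstacle is upgrading this to a genuine equivalence $\aA \simeq \wt{\aA}$. My plan is induction on $n$ via the recollement $\dD_{n-1} \subset \dD \to \dD^b(\kk)$. The hypotheses restrict to the sub-sequence $(E_1, \ldots, E_{n-1})$, whose left dual in $\dD_{n-1}$ is $(F_{n-1}, \ldots, F_1)$ (since the relevant mutations stay in $\dD_{n-1}$), so by induction $\aA_{n-1}$ is highest weight with standards and costandards given by these sequences, and $\aA$ is obtained by BBD gluing of $\aA_{n-1}$ with $\mathrm{Vect}_\kk$. The new simple $L_n$ is the image of the canonical map $E_n \to F_n$ in $\aA$, well-defined since $E_n, F_n \in \aA$; the axioms (st1), (cost1) for $\Delta_n = E_n$ and $\nabla_n = F_n$ then hold because the kernel of $E_n \to L_n$ and the cokernel of $L_n \to F_n$ lie in $\aA_{n-1}$ and are thus filtered by $L_t$ with $t < n$. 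Moreover $E_n = j_{n!}\kk$ is projective in $\aA$ by adjunction with the $t$-exact functor $j_n^*$, giving $P_n = E_n$ and (st2) at the top weight; dually $F_n$ is injective. For $i < n$, the projective cover $P_i$ of $L_i$ in $\aA$ is constructed as the universal extension of the inductively known projective cover $P'_i \in \aA_{n-1}$ by $E_n$, matching the iterated universal extension of Theorem \ref{thm_DR}. The crucial technical step is to verify that this $P_i$ is genuinely projective in the glued $\aA$ (not only in $\wt{\aA}$): the universal extension kills $\Ext^1_\aA(P_i, E_j)$ for $j > i$, which combined with support-based vanishing $\Hom_\aA(E_j, C_t) = 0$ for $j \geq t$ (where $C_t$ is the cokernel of $L_t \hookrightarrow F_t$) yields $\Ext^1_\aA(P_i, L_t) = 0$ for all $t$. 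Finally, applying the dual argument in $\aA^{\opp}$ via Corollary \ref{cor_dual_heart} with the second vanishing hypothesis identifies $\{F_i\}$ as the costandard objects.

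For part (2), the plan is to use DG enhancement to lift the inclusion $\aA \hookrightarrow \dD$ to a triangulated functor $\Psi \colon \dD^b(\aA) \to \dD$. By Lemma \ref{lem_two_exc_coll}, the already-established highest weight structure on $\aA$ gives $(E_1, \ldots, E_n)$ as a full exceptional sequence in $\dD^b(\aA)$, which $\Psi$ sends to the original exceptional sequence in $\dD$. Essential surjectivity follows because the sequence generates both sides. For fully faithfulness, the vanishing of negative $\Hom$s concentrates the endomorphism DG algebra $\bigoplus \Hom^\bcdot_\dD(E_i, E_j)$ in non-negative degrees, forcing formality; its cohomology coincides with the corresponding Ext-algebra in the quasi-hereditary (hence finite-global-dimension) category $\aA$. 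A standard Bondal--Kapranov reconstruction argument then yields $\dD \cong \dD^b(\aA)$.
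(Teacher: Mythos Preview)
Your forward direction and the opening reduction (showing $E_i,F_i\in\aA$ via Lemma~\ref{lem_glued_t-str}, recognizing $(E_1,\ldots,E_n)$ as standardizable, invoking Theorem~\ref{thm_DR}) match the paper. The inductive scheme via the recollement $\dD_{n-1}\subset\dD$ is also the paper's. The difference lies in how projectivity of $P_i$ in $\aA$ is established, and here your sketch has a genuine gap.

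\textbf{Part (1), the projectivity step.} Your argument gives $\Ext^1_\aA(P_i,E_j)=\Hom_\dD(P_i,E_j[1])=0$ for all $j$, and your ``support-based vanishing'' $\Hom(E_j,C_t)=0$ for $j\geq t$ does yield $\Hom_\dD(P_i,C_t)=0$ when $t\leq i$ (since then $P_i$ is filtered by $E_j$ with $j\geq i\geq t$), hence $\Ext^1_\aA(P_i,L_t)=0$ in that range. But for $t>i$ --- and in the inductive step the critical case is $t=n$ --- the argument breaks. From $0\to K_n\to E_n\to L_n\to 0$ one is reduced to showing $\Hom_\dD(P_i,K_n[2])=\Hom_{\dD_{n-1}}(P'_i,K_n[2])=0$, where $K_n\in\aA_{n-1}$. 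Knowing that $P'_i$ is projective in $\aA_{n-1}$ only gives vanishing of $\Hom_{\dD_{n-1}}(P'_i,-[1])$ on $\aA_{n-1}$; for $l\geq 2$ one cannot identify $\Hom_{\dD_{n-1}}(P'_i,-[l])$ with $\Ext^l_{\aA_{n-1}}(P'_i,-)$ without already knowing $\dD_{n-1}\simeq\dD^b(\aA_{n-1})$. The paper avoids this by strengthening the inductive statement: it proves that $P$ is \emph{tilting} in $\dD$, i.e.\ $\Hom_\dD(P,P[l])=0$ for all $l\neq 0$. The key new input is $\Hom_\dD(Q_i,E_n[l])=0$ for $l\geq 2$, obtained from $i_n^!E_n=i_n^*F_n[-1]\in\dD_{n-1}^{\leq 1}$ --- this is precisely where the hypothesis on the $F$'s does more work than just placing them in $\aA$. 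Tilting then gives a $t$-structure with positive aisle $\dD^{\geq 0}_P=\{D:\Hom(P,D[l])=0,\ l<0\}$, and a direct recollement computation shows $\dD^{\geq 0}_P=\dD^{\geq 0}$ (glued). Projectivity of $P$ in $\aA$ follows from the two $t$-structures coinciding, without ever needing to compare $\Ext^l_\aA$ with $\Hom_\dD[l]$ for $l\geq 2$.

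\textbf{Part (2), the formality claim.} Having cohomology concentrated in non-negative degrees does \emph{not} force a DG algebra to be formal, so your reconstruction from $\bigoplus_i E_i$ does not go through as written. The paper instead uses the tilting generator $P$: its DG endomorphism algebra has cohomology concentrated in degree~$0$, which \emph{does} force formality. Hence $\dD\simeq\textrm{Perf}(\End_\aA P)$, and since $\aA\simeq\modu\End_\aA P$ is highest weight (so of finite global dimension), this equals $\dD^b(\aA)$.
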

\begin{proof}
	If $E_i$ and $F_i$ are objects of the heart $\aA$, the vanishing  of the negative Hom-groups in $(1)$ follows.
	
	Assume that $\Hom_{\dD}(\bigoplus_{i=1}^n E_i, \bigoplus_{i=1}^n E_i[l]) = 0 = \Hom_{\dD}(\bigoplus_{i=1}^n F_i, \bigoplus_{i=1}^n F_i[l])$ for all $l<0$. By Lemma \ref{lem_glued_t-str} and the 
	Hom-duality given by \eqref{eqtn_dual_collec}, $E_i$'s and $F_j$'s are objects of the heart $\aA$. Denote by $P = \bigoplus_{i=1}^nP_i$ the iterated universal extension of the standarizable sequence $(E_1,\ldots,E_n)$ in $\aA$. 
	We prove by induction on the length $n$ of the exceptional sequence that $P$ is a strong titling generator in the triangulated category $\dD$ and $P$ is a projective generator in the abelian category $\aA$. The case $n=1$ is clear.
	
	Let us assume that the statement holds for any sequence of length less than $n$. Let $Q = \bigoplus_{i=1}^{n-1} Q_i$ be the iterated universal extension of $\{E_1,\ldots, E_{n-1}\}$. 
	
	Consider recollement \eqref{eqtn_recol_l} for $s=n$. Both $Q$ and $F_n = {j_n}_*(\kk)$ are objects of $\aA$ by Lemma \ref{lem_glued_t-str}, hence $\Hom_{\dD}(Q, F_n[l]) =0$, for all $l<0$. Moreover, by the definition of the glued \tr e \cite{BBD}, $i_n^*F_n$ is an object of the negative aisle of the glued \tr e on $\dD_{n-1}$. It follows that $i_n^!{j_n}_!(\kk) = i_n^*{j_n}^*(\kk)[-1] = i_n^*F_n[-1]$ is an object of $\dD_{n-1}^{\leq 1}$. Hence, $\Hom_{\dD_{n-1}}(Q, i_n^!{j_n}^!(\kk)[l]) \cong \Hom_{\dD}(Q, {j_n}_!(\kk)[l]) = \Hom_{\dD}(Q, E_n[l])=0$, for $l\geq 2$.
	
	As $\aA\subset \dD$ is the heart of a \tr e, $\Ext^1_{\aA}(Q_i E_n) \cong \Hom_{\dD}(Q_i, E_n[1])$. 
	Thus, for $i\leq n-1$, the short exact sequence defining $P_{i}$ gives an exact triangle
	\begin{align}\label{eqtn_def_P_i}
	 E_n \otimes \Hom_{\dD}(Q_i, E_n[1])^{\vee} \to P_i \to Q_i 
	\end{align}
	in $\dD$. First, we check that $P $ is tilting.
	
	Consider $i<n$.
	The map $\Hom_{\dD}(E_n \otimes \Hom_{\dD}(Q_i, E_n[1])^\vee, E_n) \to \Hom_{\dD}(Q_i, E_n[1])$ induced by \eqref{eqtn_def_P_i} is an isomorphism in view of the definition of the class in $\Ext_{\aA}^1(Q_i, E_n\otimes \Hom_{\dD}(Q_i, E_n[1])^{\vee})$ defining $P_i$. Moreover, $\Hom_{\dD}(E_n, E_n[l]) =0$,  for $l\neq 0$, as $E_n$ is exceptional, and $\Hom_{\dD}(Q_i, E_n[l]) = 0$, for $l\geq 2$, as observed above. It follows from the long exact sequence obtained by applying $\Hom_{\dD}(-, E_n)$ to \eqref{eqtn_def_P_i} that $\Hom_{\dD}(P_i, E_n[l])=0$, for $l\geq 1$. Since $P_i$ and $E_n=P_n$ are both objects of the heart $\aA$, we get vanishing of $\Hom_{\dD}(P_i, E_n[l]) = \Hom_{\dD}(P_i, P_n[l])$, for all $l \neq 0$ and all $i\in \{1, \ldots,n-1\}$. As $P_n=E_n$ is exceptional, we also get vanishing of  $\Hom_{\dD}(P_n, P_n[l])$, for every $l\neq 0$.
		
	Object $Q_j$ lies in $\dD_{n-1}$, hence $\Hom_{\dD}(P_n, Q_j[l]) = \Hom_{\dD}(E_n, Q_j[l])=0$, for any $l$.
	 In view of  \eqref{eqtn_def_P_i}, it follows that, if $i,j<n$, then $\Hom_{\dD}(P_i, Q_j[l]) \cong \Hom_{\dD}(Q_i, Q_j[l])$, which  vanish for $l \neq 0$ by the inductive assumption that $Q$ is tilting. 
	 We conclude that $\Hom_{\dD}(P_i, P_j[l]) =0$ for $l \neq 0$, $i\leq n$ and $j<n$. Indeed, by the long exact sequence obtained by applying $\Hom_{\dD}(P_i, -)$ to \eqref{eqtn_def_P_i} the statement follows from vanishing of $\Hom_{\dD}(P_i, Q_j[l])$, for $l\neq 0$, and the vanishing of $\Hom_{\dD}(P_i, E_n[l])$, for $l\neq 0$, proved above. Hence, $P$ is tilting.
	 

	Next we check that  $P$ is a classical generator for $\dD$. To this end we show 
	that $E_1,\ldots, E_n$ are objects of the subcategory $\langle P \rangle$ generated by $P$.   In view of \eqref{eqtn_def_P_i} objects $Q_1, \ldots, Q_{n-1}$ are in $\langle P \rangle$, i.e. $\langle Q \rangle \subset \langle P \rangle$. By inductive hypothesis, $E_1,\ldots, E_{n-1} \in \langle Q \rangle$. As $P_n = E_n$, we conclude that $E_1,\ldots, E_n \in \langle P \rangle$.

	
	Finally we check that $P$ is projective generator for $\aA$. As it is a tilting generator for $\dD$, category $\dD$ admits a \tr e with  
	\begin{align*}
	\dD^{\gge 0}_{P} = \{D\in \dD \,|\, \Hom_{\dD}(P, D[l])=0,\textrm{ for }l<0\}.
	\end{align*}
	The long exact sequence obtained by applying $\Hom_{\dD}(-,D)$ to \eqref{eqtn_def_P_i} and the isomorphism $P_n \simeq E_n$ imply that  if $D\in \dD^{\geq 0}_P$ then $\Hom_{\dD}(Q, D[l])$ vanish for $l<0$. On the other hand, the same sequence implies that, if $\Hom_{\dD}(Q, D[l])$ and $\Hom_{\dD}(P_n, D[l]) =0$, for $l<0$, then $D$ is an object of $\dD^{\geq 0 }_P$. Hence, as $E_n = {j_n}_!\kk$ and $Q = {i_n}_* Q$, 
	\begin{align*}
	\dD^{\gge 0}_P &= \{D\in \dD\,|\, \Hom_{\dD}({j_n}_!\kk, D[l]) = 0 = \Hom_{\dD}({i_n}_*Q, D[l]),  \textrm{ for }l<0\}\\
	& = \{D\in \dD\,|\, \Hom_{\kk}(\kk, j_n^*D[l]) = 0 = \Hom_{\dD_{n-1}}(Q, i_n^!D[l]),  \textrm{ for }l<0\}\\
	& = \{D\in \dD\,|\, j_n^*\in \dD^b(\kk)^{\gge 0}, i_n^!D\in {\dD_{n-1}}_Q^{\gge 0}\}.
	\end{align*}
	By inductive hypothesis the \tr e on $\dD_{n-1}$ given by $Q$ is the \tr e glued along the exceptional sequence $( E_1,\ldots, E_{n-1})$. Hence, in view of the definition of the glued \tr e \cite{BBD}, $\dD^{\gge 0}_P$ coincides with the positive aisle of the glued \tr e, i.e. these \tr es agree. 
	In particular, $P$ is a projective generator for the heart $\aA$.	
	
	It follows that $\aA \cong \textrm{mod-}\End_{\aA}P$ is a highest weight category with standard objects $\{E_i\}$, see Theorem \ref{thm_DR}. 
	By the Hom-duality \eqref{eqtn_dual_collec}, the left dual of the full exceptional sequence $(F_1,\ldots,F_n)$ in $\dD^{\opp}$ is $(E_n,\ldots, E_1)$. Moreover, this pair of dual exceptional sequences satisfies conditions in $(1)$. Hence,  the heart $\aA^{\opp}$ of the glued \tr e is highest weight with standard objects $\{F_i\}$ (see Corollary \ref{cor_dual_heart}), which implies that $\{F_i\}$ is the sequence of costandard objects in $\aA$, see Section \ref{ssec_hwc}. 
	
	Finally, let $\wt{\dD}$ be a DG enhancement for $\dD$ and $\wt{\aA}$ the full subcategory of $\wt{\dD}$ with objects $P_1, \ldots, P_n$. As $\dD = H^0(\wt{\dD})$ is idempotent complete and classically  generated by objects of $\wt{\aA}$, by \cite[Proposition 1.16]{LunOrl}, the category $\dD$ is equivalent to the category $\textrm{Perf}(\wt{\aA})$ of perfect complexes over $\wt{\aA}$. Further, since $P$ is tilting, the category $\textrm{Perf}(\wt{\aA})$ is equivalent to the category $\textrm{Perf}(\End_{\aA}P)$  of perfect complexes over the zero cohomology of $\wt{\aA}$. Finally, as $\aA$ is a highest weight category, it is of finite global dimension \cite{CPS}, hence $\textrm{Perf}(\End_{\aA} P) \simeq \dD^b(\aA)$.
\end{proof}

\vspace{0.3cm}
\subsection{Ringel duality in terms of dual pairs of exceptional sequences}~\\\label{Ring-dual}

Recall Ringel duality on highest weight categories \cite{Rin}. The Ringel dual $\mathbf{RD}(\aA)$ of a highest weight category $\aA$ is the abelian category of modules over the algebra of endomorphisms of the so-called {\it characteristic tilting module}. It is the additive generator of the category of objects which admit filtrations with both standard and costandard objects in $\aA$. In \cite{BodBon4}, we show that the characteristic tilting module is identified with an injective generator in the exact category $\mathcal{F}(\Delta)$ of objects admitting a filtration with quotients by direct sums of standard objects.

\begin{THM}\label{thm_Ringel} Let $\aA$ be a highest weight category. We assume it to be the heart of the \tr e glued along a full exceptional sequence $\epsilon=(E_1,\dots . E_n)$ in a triangulated category  $\dD$.
Then the Ringel dual $\mathbf{RD}(\aA)$ of $\aA$ is equivalent to the heart of the \tr e glued along the left dual exceptional sequence $( F_n, \ldots, F_1 )$.
\end{THM}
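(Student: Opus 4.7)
The plan is to apply Theorem \ref{thm_univ_ext_tilt_gen} to the left dual exceptional sequence $(F_n, \ldots, F_1)$ and to identify the resulting projective generator of the new heart $\aA'$ with the characteristic tilting object $T$ of $\aA$, thereby realizing $\aA'$ as $\modu \End_\aA(T)^{\opp} = \mathbf{RD}(\aA)$.

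First I would verify that $\aA'$, the heart of the $t$-structure on $\dD$ glued along $(F_n, \ldots, F_1)$, contains the characteristic tilting object $T = \bigoplus_\lambda T_\lambda$ of $\aA$. Each $T_\lambda$ admits a $\nabla$-filtration in $\aA$ with subquotients $F_\mu = \nabla(\mu)$, $\mu \preceq \lambda$; since each $F_\mu$ lies in $\aA \cap \aA'$ and $\aA'$ is closed under extensions in $\dD$, a straightforward induction on $\lambda$ places every $T_\lambda$ in $\aA'$. Moreover, the Ext-orthogonality $\Ext^{\geq 1}_\aA(\Delta(\mu), \nabla(\nu)) = 0$ combined with the $\Delta$-filtration of $T$ yields $\Ext^{\geq 1}_\dD(T, F_s) = 0$ for every $s$, so $T$ is $\Ext^1$-rigid against every $F_s$ in $\dD$ and has no positive self-Ext.

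Next, I would identify $T$ with the iterated universal extension $P = \bigoplus_k P_k$ of the sequence $(F_n, \ldots, F_1)$. Unwinding the recursive definition, with $E_k^{\mathrm{new}} := F_{n-k+1}$, the summand $P_k$ has $F_{n-k+1}$ as its top quotient and a kernel $\nabla$-filtered by $F_j$ with $j < n-k+1$, the multiplicities being forced by universality to kill all $\Ext^1$-obstructions against those $F_j$. The characteristic tilting summand $T_{\lambda_{n-k+1}}$ satisfies the identical profile: top quotient $\nabla(\lambda_{n-k+1}) = F_{n-k+1}$, kernel $\nabla$-filtered by $F_\mu$ with $\mu \prec \lambda_{n-k+1}$, and $\Ext^{\geq 1}_\dD(T_{\lambda_{n-k+1}}, F_j) = 0$ for all $j$. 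Matching summand by summand, by induction on $\Lambda$ and using the uniqueness up to isomorphism of such an $\Ext^1$-rigid, $\nabla$-filtered object with prescribed top quotient --- a manifestation of Ringel's characterization of characteristic tilting modules, cf.\ also \cite{BodBon4} --- one obtains $P \cong T$.

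Consequently, Theorem \ref{thm_univ_ext_tilt_gen} applied to $(F_n, \ldots, F_1)$, with its second vanishing hypothesis on the left dual deduced from Corollary \ref{cor_dual_heart} together with the identification just established, produces $\aA' \simeq \modu \End_\dD(P)^{\opp}$. Substituting $P \cong T$ and using $\End_\dD(T) = \End_\aA(T)$ (from the vanishing of positive self-Ext of $T$), we conclude $\aA' \simeq \modu \End_\aA(T)^{\opp} = \mathbf{RD}(\aA)$, as required. The main obstacle is the identification $P \cong T$: it calls for a careful parallel comparison of two inductive constructions --- the universal-extension recursion underpinning Theorem \ref{thm_univ_ext_tilt_gen} and Ringel's axiomatic construction of characteristic tilting --- and in particular verifying that at each stage the multiplicities of the $F_j$ forced by the $\Ext^1$-universality coincide with those appearing in the $\nabla$-filtration of $T_{\lambda_{n-k+1}}$.
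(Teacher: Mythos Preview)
Your approach is genuinely different from the paper's. The paper uses the Serre functor $\mathbb{S}$ on $\dD$: it observes that the left dual of $(F_n,\ldots,F_1)$ is $(\mathbb{S}E_1,\ldots,\mathbb{S}E_n)$, so the second vanishing hypothesis of Theorem~\ref{thm_univ_ext_tilt_gen} for this sequence is immediate from the vanishing for the $E_i$'s; then it invokes \cite[Theorem 6.3]{BodBon4} to describe $\mathbf{RD}(\aA)$ as highest weight with costandard objects $\{E_i\}$ and matches the two descriptions via $\mathbb{S}$. Your route---identifying the iterated universal extension $P$ of $(F_n,\ldots,F_1)$ with the characteristic tilting $T$---is more hands-on and avoids the external reference. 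The identification is essentially correct: each $P_k$ lies in $\fF(\nabla)$ by construction, and one checks directly from the universal-extension recursion (using $\Ext^1(F_i,F_j)=0$ for $i<j$) that $\Ext^1_\dD(P_k,F_j)=0$ for all $j$, hence $P_k\in\fF(\Delta)\cap\fF(\nabla)$; since $\nabla(\la_{n-k+1})$ appears as the top $\nabla$-factor, $T_{\la_{n-k+1}}$ is a summand of $P_k$, giving $\add(P)=\add(T)$, which suffices.

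There is, however, a gap in your logic. To invoke Theorem~\ref{thm_univ_ext_tilt_gen} for $(F_n,\ldots,F_1)$ and conclude that $P$ is a projective generator of $\aA'$ (so that $\aA'\simeq\modu\End(P)$), you need the second vanishing hypothesis---no negative Ext among the objects of the \emph{left dual} of $(F_n,\ldots,F_1)$. Your proposed derivation ``from Corollary~\ref{cor_dual_heart} together with the identification just established'' is circular: the identification $P\cong T$ alone does not tell you anything about the left dual sequence, and without the second vanishing you cannot yet conclude that $P$ is projective in $\aA'$. The fix is exactly the paper's observation: the left dual of $(F_n,\ldots,F_1)$ is $\mathbb{S}(\epsilon)$, and since $\mathbb{S}$ is an autoequivalence the required vanishing follows from $\Hom_\dD(E_i,E_j[l])=0$ for $l<0$. (Minor point: the ${}^{\opp}$ in your $\modu\End(P)^{\opp}$ should not be there under the paper's convention that $\modu$ denotes right modules; it cancels with the second ${}^{\opp}$ you write, so the conclusion is unaffected.)
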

\begin{proof}
	Let $\mathbb{S}$ be the Serre functor on $\dD$, see \cite{B}, \cite{BK1}. Then $\mathbb{S}(\epsilon) = ( \mathbb{S}(E_1), \ldots, \mathbb{S}(E_n) )$ is left dual to $( F_n,\ldots, F_1 )$.  Since the heart $\aA$ is highest weight, negative Ext-groups between $E_i$'s and between  $F_i$'s vanish, according to Theorem \ref{thm_univ_ext_tilt_gen}. Moreover, $\{E_i\}$ are the standard objects in $\aA$ and $\{F_i\}$ are the costandard ones.
	
	Functor $\mathbb{S}$ is an equivalence, hence negative Ext-groups between  $\mathbb{S}(E_i)$'s vanish too. Then, Theorem \ref{thm_univ_ext_tilt_gen} implies that the heart $\bB$ glued along $(F_n,\dots , F_1)$ is highest weight with standard objects $\{F_i\}$ and costandard objects $\{\mathbb{S}(E_i)\}$.
	
	By \cite[Theorem 6.3]{BodBon4} $\mathbf{RD}(\aA)$ is highest weight with standard objects $\{\mathbb{S}^{-1} F_i\}$ and costandard objects $\{E_i\}$.
	By applying the Serre functor $\mathbb{S}$ we identify $\mathbf{RD}(\aA)$ with $\bB$.
\end{proof}

\begin{REM}\label{involut}
Note that the double dual of an exceptional sequence $\epsilon$ leads to the sequence $\mathbb{S}\epsilon$. Since $\mathbb{S}$ is an auto-equivalence, Theorem \ref{thm_Ringel} implies that Ringel duality is an involutive operation on equivalence classes of highest weight categories.
\end{REM}

\vspace{0.3cm}
\subsection{Restricted \tr e with the highest weight heart}~\\

Now we consider an arbitrary $t$-category and a (not necessarily full) exceptional sequence in it. We will need this set-up in section \ref{birat-surf} where we study the null-categories of birational morphisms of smooth surfaces.

\begin{THM}\label{thm_general}
	Consider a $t$-category $\dD$ with $\aA \subset \dD$ the heart of the \tr e. Let $\epsilon =(E_1,\ldots, E_n)$ be an exceptional sequence in $\dD$ with left dual $(F_n,\ldots, F_1)$. 
	\begin{enumerate}
		\item If, for any $i$, $E_i\in \dD^{\leq 0}$ and $F_i \in \dD^{\geq 0}$, then the \tr e on $\dD$ restricts to the  	\tr e on $\cC:= \langle E_1,\ldots,E_n\rangle$ glued along the exceptional sequence $(E_1,\ldots, E_n)$. The heart of this \tr e is $\aA \cap \cC$. 
		\item If $E_i, F_i\in \aA$, for all $i$, then $\aA\cap \cC$
		has a structure of the highest weight category with standard objects $\{E_i\}$, costandard $\{F_i\}$ and the partial order on irreducible objects induced by $\epsilon$.
	\end{enumerate}
\end{THM}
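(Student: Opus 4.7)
\emph{Proof proposal.} The plan is to derive part (1) by comparing the $\dD$-$t$-structure with the $t$-structure on $\cC := \langle E_1,\ldots,E_n\rangle$ glued along $(E_1,\ldots,E_n)$, and then obtain part (2) by invoking Theorem \ref{thm_univ_ext_tilt_gen} on $\cC$.

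For part (1), the key claim to establish is that the aisles of the glued $t$-structure on $\cC$ are contained in the restrictions of the $\dD$-aisles: $\cC^{\lle 0}_{\mathrm{gl}} \subseteq \cC \cap \dD^{\lle 0}$ and $\cC^{\gge 0}_{\mathrm{gl}} \subseteq \cC \cap \dD^{\gge 0}$. Once this is shown, for any $C \in \cC$ the glued truncation triangle $A \to C \to B$ automatically yields $A \in \dD^{\lle 0}$ and $B \in \dD^{\gge 1}$, so by uniqueness of $t$-structure truncations in $\dD$ it must coincide with the $\dD$-truncation of $C$. Consequently the $\dD$-$t$-structure restricts to $\cC$, the restriction agrees with the glued one, and its heart is $\aA \cap \cC$.

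I would prove the aisle inclusion by induction on $n$. The case $n=1$ is immediate, since $\cC \cong \dD^b(\kk)$ and every object of its negative aisle is a direct sum of shifts $E_1[k]$ with $k \gge 0$, each lying in $\dD^{\lle 0}$ by hypothesis on $E_1$; the positive aisle is handled analogously using $F_1 = E_1 \in \dD^{\gge 0}$. For the inductive step, I use the recollement (\ref{eqtn_recol_l}) and the triangle $j_{n!}j_n^*D \to D \to i_{n*}i_n^*D$. If $D \in \cC^{\lle 0}_{\mathrm{gl}}$ then, by the definition of a glued $t$-structure, $j_n^*D \in \dD^b(\kk)^{\lle 0}$ splits as a direct sum $\bigoplus \kk[k_l]$ with $k_l \gge 0$, so $j_{n!}j_n^*D \simeq \bigoplus E_n[k_l]$ belongs to $\dD^{\lle 0}$ by the hypothesis on $E_n$. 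Simultaneously $i_n^*D$ lies in the negative aisle of the glued $t$-structure on $\langle E_1,\ldots,E_{n-1}\rangle$, hence in $\dD^{\lle 0}$ by the inductive hypothesis, and so does $i_{n*}i_n^*D$. Extension closure of $\dD^{\lle 0}$ then forces $D \in \dD^{\lle 0}$. The positive aisle inclusion is symmetric, using the triangle $i_{n*}i_n^!D \to D \to j_{n*}j_n^*D$ together with $F_n = j_{n*}(\kk) \in \dD^{\gge 0}$.

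For part (2), suppose in addition that all $E_i, F_i$ lie in $\aA$. By part (1), the restricted $t$-structure on $\cC$ is the one glued along $(E_1,\ldots,E_n)$, a full exceptional sequence of $\cC$ with left dual $(F_n,\ldots,F_1)$, and its heart is $\aA \cap \cC$. Since $\aA$ is the heart of a $t$-structure on $\dD$ and $\cC \subseteq \dD$ is a full triangulated subcategory, the vanishing $\Hom_{\cC}(E_i, E_j[l]) = \Hom_{\dD}(E_i, E_j[l]) = 0$ for $l<0$ follows automatically from $E_i,E_j \in \aA$, and likewise for the $F_i$'s. These are precisely the hypotheses of Theorem \ref{thm_univ_ext_tilt_gen}(1), which identifies $\aA \cap \cC$ as a highest weight category with standard objects $\{E_i\}$ and costandard objects $\{F_i\}$. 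The main technical hurdle is the inductive aisle computation in part (1): one must keep careful track of the recollement functors and exploit the fact that $\dD^b(\kk)$ is semisimple, so that $j_n^*D$ decomposes as an honest direct sum of shifts of $\kk$, allowing $j_{n!}j_n^*D$ to be identified with a sum of non-positive shifts of $E_n$.
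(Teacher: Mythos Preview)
Your proposal is correct and follows essentially the same route as the paper. The only difference is packaging: the paper phrases part (1) as ``the embedding $\Phi\colon \cC\to\dD$ is $t$-exact'' and isolates your recollement-triangle computation into a separate Lemma \ref{lem_left_t-exact}, whereas you inline that argument directly and spell out why the aisle inclusions force the truncation triangles to agree; part (2) is identical in both, reducing to Theorem \ref{thm_univ_ext_tilt_gen}.
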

\begin{proof}
	Note that the \tr e on $\dD$ restricts to the glued \tr e on $\cC$ if and only if the embedding functor $\Phi \colon \cC\to \dD$ is $t$-exact \cite[1.3.19]{BBD}.
	
	We prove the $t$-exactness of the functor $\Phi$ by induction on length $n$ of the exceptional sequence. If $n=1$, the statement is clear. Consider recollement (\ref{eqtn_recol_l}) for $s=n$.  By inductive assumption $\Phi\circ {i_n}_*$ is $t$-exact. Since $\Phi j_{n!}(\kk) = E_n \in \dD^{\leq 0}$, $\Phi j_{n*}(\kk) = F_n \in\dD^{\geq 0}$, functor $\Phi\circ j_{n!}$ is right $t$-exact and $\Phi\circ j_{n*}$ is left $t$-exact. By Lemma \ref{lem_left_t-exact} below, functor $\Phi$ is $t$-exact, which proves (1).
	
	If $\{E_i\}$ and $\{F_i\}$ are in $\aA$, then $\Hom_{\dD}(\bigoplus E_i, \bigoplus E_i[l])$ and $\Hom_{\dD}(\bigoplus F_i, \bigoplus F_i[l])$ vanish for $l<0$. Hence, (2) follows from Theorem \ref{thm_univ_ext_tilt_gen}. 
\end{proof}

\begin{LEM}\label{lem_left_t-exact}
	Consider recollement
	\begin{equation}\label{eqtn_gen_recol}
	\xymatrix{\dD_1 \ar[r]|{i_*} & \dD \ar[r]|{j^*} \ar@<2ex>[l]|{i^!} \ar@<-2ex>[l]|{i^*} & \dD_2 \ar@<2ex>[l]|{j_*} \ar@<-2ex>[l]|{j_!}}
	\end{equation}
	and a $t$-category $\dD'$. Given \tr es on $\dD_1$ and $\dD_2$, an exact functor $\f\colon \dD \to \dD'$ is 
	\begin{enumerate}
		\item left $t$-exact for the glued \tr e on $\dD$ if $\f i_*$, $\f j_*$ are left $t$-exact.
		\item right $t$-exact for the glued \tr e on $\dD$ if $\f i_*$, $\f j_!$ are right $t$-exact.
	\end{enumerate}
\end{LEM}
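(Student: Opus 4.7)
The plan is to exploit the two canonical exact triangles attached to the recollement \eqref{eqtn_gen_recol}. For every $D\in\dD$, functoriality of the adjunctions gives
\[i_*i^!D \to D \to j_*j^*D \to i_*i^!D[1]\]
and
\[j_!j^*D \to D \to i_*i^*D \to j_!j^*D[1].\]
Combined with the defining conditions for the glued \tr e on $\dD$—namely $D\in\dD^{\geq 0}$ iff $i^!D\in\dD_1^{\geq 0}$ and $j^*D\in\dD_2^{\geq 0}$, and dually for $\dD^{\leq 0}$ (see \cite[Theorem 1.4.10]{BBD} or the computation in the proof of Lemma \ref{lem_glued_t-str})—each of the two claims reduces to a one-line verification using that both aisles of the \tr e on $\dD'$ are closed under extensions.

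For part (1), take $D\in\dD^{\geq 0}$ and apply the exact functor $\f$ to the first triangle above. The outer terms become $\f i_*(i^!D)$ and $\f j_*(j^*D)$; since $i^!D\in\dD_1^{\geq 0}$ and $j^*D\in\dD_2^{\geq 0}$, the hypotheses that $\f i_*$ and $\f j_*$ are left $t$-exact place both terms in $\dD'^{\geq 0}$. Extension-closure of the positive aisle then forces $\f D\in\dD'^{\geq 0}$. Part (2) is the mirror argument: for $D\in\dD^{\leq 0}$, apply $\f$ to the second triangle; by right $t$-exactness of $\f j_!$ and $\f i_*$ the outer terms land in $\dD'^{\leq 0}$, and extension-closure of the negative aisle yields $\f D\in\dD'^{\leq 0}$.

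The proof is entirely formal and I do not anticipate any real obstacle. The only point that deserves care is matching each side of the statement with the correct canonical triangle: left $t$-exactness must be tested against the triangle ending in $j_*j^*D$ (because it is $j_*$, not $j_!$, that preserves the positive aisle, which is why the hypothesis in (1) involves $\f j_*$), while right $t$-exactness must be tested against the triangle starting with $j_!j^*D$ (matching the hypothesis on $\f j_!$ in (2)).
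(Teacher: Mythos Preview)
Your proof is correct and matches the paper's argument essentially verbatim: the paper uses exactly the triangle $i_*i^!D \to D \to j_*j^*D$ for part (1) and declares part (2) analogous. Your explicit identification of the dual triangle for (2) and the remark on why each hypothesis pairs with its respective triangle are accurate elaborations of what the paper leaves implicit.
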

\begin{proof}
	To prove (1), we need to show that $\f(\dD^{\gge 0}) \subset \dD'^{\gge 0}$. Any object $D\in \dD$ fits into an exact triangle
	$$
	i_*i^! D\to D \to  j_*j^*D\to i_*i^!D[1].
	$$
	If $D\in \dD^{\geq 0}$ then $i^!D$ lies in $\dD_1^{\gge 0}$ and $j^*D\in \dD_2^{\gge 0}$. The left $t$-exactness of $\f i_*$ and $\f j_*$ implies that
	$\f i_*i^!(D), \f j_*j^*(D) \in \dD'^{\gge 0}$, hence also $\f (D )\in \dD'^{\gge 0}$. 
	The proof of (2) is analogous.
\end{proof} 

\begin{COR}\label{cor_hwc_is_glued}
	Let $(\aA, \Lambda)$ be a highest weight category and $(\Delta(\la_1), \ldots, \Delta(\la_n))$, $(\nabla (\la_n), \ldots, \nabla(\la_1))$ the exceptional sequences of standard  and costandard objects. Then 
	\begin{enumerate}
		\item the full subcategory $\dD_i = \langle \Delta(\la_1), \ldots, \Delta(\la_i) \rangle$ and $ \langle \nabla(\la_i), \ldots, \nabla(\la_1) \rangle$ coincide.
		\item the standard \tr e on $\dD^b(\aA)$ is glued along $(\Delta(\la_1), \ldots, \Delta(\la_n))$.
	\end{enumerate}
%
\end{COR}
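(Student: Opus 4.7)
The plan is to derive both statements from the material already assembled in the excerpt, using Lemma \ref{lem_two_exc_coll} and Theorem \ref{thm_general}(1). First recall that Lemma \ref{lem_two_exc_coll} guarantees that $(\Delta(\la_1),\ldots,\Delta(\la_n))$ is a \emph{full} exceptional sequence in $\dD^b(\aA)$ with left dual $(\nabla(\la_n),\ldots,\nabla(\la_1))$; fullness is just the fact that a highest weight category has finite global dimension and a projective generator, so standard objects generate $\dD^b(\aA)$ as a triangulated category.

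For (1), I would argue by induction on $i$ using the mutation description of the left dual sequence recalled in Section \ref{ssec_dual_exc_coll}: $\nabla(\la_i)=L_{\Delta(\la_1)}\cdots L_{\Delta(\la_{i-1})}\Delta(\la_i)$. Each left mutation $L_{\Delta(\la_k)}$ with $k<i$ preserves the triangulated subcategory $\dD_i=\langle \Delta(\la_1),\ldots,\Delta(\la_i)\rangle$, hence $\nabla(\la_1),\ldots,\nabla(\la_i)$ all lie in $\dD_i$, giving $\langle \nabla(\la_i),\ldots,\nabla(\la_1)\rangle \subseteq \dD_i$. Since both are admissible triangulated subcategories generated by exceptional sequences of equal length $i$, their Grothendieck groups have the same rank $i$, and the inclusion must be an equality. (Alternatively, the symmetric argument applied to the opposite sequence gives the reverse inclusion directly.)

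For (2), I would apply Theorem \ref{thm_general}(1) to $\dD=\dD^b(\aA)$ equipped with its standard \tr e, and to the exceptional sequence $\epsilon=(\Delta(\la_1),\ldots,\Delta(\la_n))$ with left dual $(\nabla(\la_n),\ldots,\nabla(\la_1))$. The hypotheses $E_i\in\dD^{\le 0}$ and $F_i\in\dD^{\ge 0}$ are immediate: both standard and costandard objects lie in the heart $\aA$. The theorem then asserts that the standard \tr e restricts to $\cC:=\langle \Delta(\la_1),\ldots,\Delta(\la_n)\rangle$ and coincides there with the \tr e glued along $\epsilon$. By fullness, $\cC=\dD^b(\aA)$, so the restriction is the standard \tr e itself, which is therefore the glued one.

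I expect no real obstacle here: part (1) is the standard fact that the admissible subcategory spanned by the first $i$ objects is a mutation invariant, and part (2) is a direct invocation of Theorem \ref{thm_general}(1) once fullness of the standard sequence is noted. The only point that deserves a line of justification is why the mutations $L_{\Delta(\la_k)}$ stay inside $\dD_i$; this follows from the defining triangle of the mutation, whose outer terms lie in $\langle \Delta(\la_k)\rangle$ and $\dD_i$ respectively.
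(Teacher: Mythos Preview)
Your proposal is correct and follows essentially the same route as the paper: part (1) is deduced from the left-duality of the costandard sequence (the paper states this in one line, while you spell out the mutation argument), and part (2) is obtained by applying Theorem \ref{thm_general}(1) with $\dD=\dD^b(\aA)$ and noting fullness. One small remark: for (1) the cleanest version of your argument is simply that $(\nabla(\la_i),\ldots,\nabla(\la_1))$ is the left dual of the truncated sequence $(\Delta(\la_1),\ldots,\Delta(\la_i))$ by the same mutation formulas, and mutations preserve the generated triangulated subcategory; the rank comparison is unnecessary.
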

\begin{proof}
The first statement follows from the fact that $(\nabla (\la_n), \ldots, \nabla(\la_1))$ is left dual to $(\Delta(\la_1), \ldots, \Delta(\la_n))$, see Lemma \ref{lem_two_exc_coll}.

	By Lemma \ref{lem_two_exc_coll} the sequence of standard objects is full in $\dD^b(\aA)$.  Let $\aA'$ be the heart of the \tr e on $\dD^b(\aA)$ glued along $(\Delta(\la_1), \ldots, \Delta(\la_n))$. Theorem \ref{thm_general} applied to the standard \tr e on $\dD^b(\aA)$ yields $\aA' \simeq \aA \cap \dD^b(\aA)\simeq \aA$. The statement $(ii)$ follows.
\end{proof}
\begin{REM}\label{rem_dual_filt}
In view of Theorem \ref{thm_Ringel} and Corollary \ref{cor_hwc_is_glued}, given a highest weight category $\aA$ with standard objects $\{\Delta(\la_i)\}$, the Ringel dual \tr e on $\dD^b(\aA)$ is glued along the filtration
$$
\langle \Delta(\la_n) \rangle \subset \langle \Delta(\la_{n-1}), \Delta(\la_n) \rangle \subset \ldots \subset \langle \Delta(\la_2), \ldots, \Delta(\la_n)\rangle \subset \dD^b(\aA).
$$
\end{REM}

\vspace{0.3cm}
\subsection{Equivalence of highest weight structures via dual pairs of exceptional sequences}~\\

We discuss equivalence classes of highest weight structures on an abelian category $\aA$.

The example of M. Kalck \cite[Proposition 5.6]{Kalck} shows that the existence of a full exceptional sequence in $\dD^b(\aA)$ of objects in $\aA$ does not imply that $\aA$ is highest weight. However, we have:  

\begin{THM}\label{thm_main}
	Let $\aA$ be an abelian category. Then equivalence classes of highest weight structures on $\aA$ are in bijection with equivalence classes of full exceptional sequences in $\dD^b(\aA)$ such that both the sequence and its left dual one consist of objects in $\aA$.
\end{THM}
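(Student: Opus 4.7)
The plan is to construct mutually inverse maps between the two sets of equivalence classes, noting that most of the technical work has already been carried out in Lemma~\ref{lem_two_exc_coll} and Theorem~\ref{thm_general}; what remains is essentially a bookkeeping argument about how order data is recorded on each side.

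In the forward direction, given a highest weight structure $(\aA,\Lambda)$, I would choose any full order $\la_1\leq\ldots\leq\la_n$ refining $\Lambda$ and apply Lemma~\ref{lem_two_exc_coll} to produce the full exceptional sequence $(\Delta(\la_1),\ldots,\Delta(\la_n))$ in $\dD^b(\aA)$ with left dual $(\nabla(\la_n),\ldots,\nabla(\la_1))$, both contained in $\aA$. Independence of the refinement, and hence well-definedness on equivalence classes of highest weight structures, is immediate since different refinements only reshuffle the order of the same underlying set of standard objects, giving equivalent exceptional sequences in the sense of Section~\ref{ssec_hwc}.

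In the backward direction, given a full exceptional sequence $\epsilon=(E_1,\ldots,E_n)$ in $\dD^b(\aA)$ whose left dual $(F_n,\ldots,F_1)$ also consists of objects of $\aA$, I would apply Theorem~\ref{thm_general}~(2) to the standard \tr e on $\dD=\dD^b(\aA)$, taking $\cC=\langle E_1,\ldots,E_n\rangle$, which coincides with $\dD^b(\aA)$ because $\epsilon$ is full. The theorem then endows $\aA=\aA\cap\cC$ with a highest weight structure whose standard objects are $\{E_i\}$ and whose costandard objects are $\{F_i\}$, for the partial order induced by $\epsilon$. Well-definedness on equivalence classes of exceptional sequences is again automatic: reordering the sequence to an equivalent one changes neither the set of standard nor the set of costandard objects, and hence, by the definition in Section~\ref{ssec_hwc}, yields an equivalent highest weight structure.

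Finally, one verifies the two compositions: starting from $(\aA,\Lambda)$, the round trip produces a highest weight structure whose standard objects are precisely $\{\Delta(\la)\}_{\la\in\Lambda}$, so the result is equivalent to the original by Remark~\ref{rem_hws_by_st}; starting from a sequence satisfying the hypotheses, the round trip returns an exceptional sequence with the same underlying set of objects, which is equivalent to the original by definition. The only delicate point, which I expect to be the main obstacle, is to trace the order data carefully: the highest weight side encodes a partial order while the exceptional side encodes a linear order, but the notions of equivalence on both sides have been set up (in Section~\ref{ssec_hwc} and Remark~\ref{rem_hws_by_st}) to forget this discrepancy, and it is exactly this compatibility that one needs to highlight to close the argument.
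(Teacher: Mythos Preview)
Your proposal is correct and follows essentially the same approach as the paper: the paper's proof is a two-line argument invoking Lemma~\ref{lem_two_exc_coll} for one direction and Theorem~\ref{thm_general}(2) with $\dD=\dD^b(\aA)$ for the other, and you have simply spelled out the well-definedness and inverse checks that the paper leaves implicit.
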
 
\begin{proof}
	Lemma \ref{lem_two_exc_coll} gives a map in one direction. Theorem \ref{thm_general}(2) for $\dD= \dD(\aA)$ gives its inverse.
\end{proof}

\begin{REM}
For an arbitrary field $\kk$, an object $E$ in a $\kk$-linear Hom-finite triangulated category is \emph{exceptional} if $\Hom(E,E)=\Gamma$ is a division algebra and $\Hom(E,E[i])=0$, for $i\neq 0$. 
The notion of an exceptional sequence generalises accordingly, i.e. $(E_1, \ldots, E_n)$ is an exceptional sequence if $E_i$'s are exceptional and $\Hom(E_j,E_i[l])=0$, for all $l\in \mathbb{Z}$ and $j>i$. Under this more general definition, all 
statements of Section \ref{sec_hwc_via_ex_col} remain true. 
\end{REM}

\section{Algebraic examples}

Here we discuss in detail the example of M. Kalck \cite{Kalck} of an algebra which is not quasi-hereditary but admits a full exceptional sequence of modules. Then, we apply Theorem \ref{thm_general} to the description of two highest weight structures on the category of modules over a directed algebra. 

\vspace{0.3cm}
\subsection{Kalck's counterexample}~\\\label{ssec_Kalck}

We recall the example of M. Kalck \cite{Kalck} of an algebra $A$ which is not quasi-hereditary and which admits a full exceptional sequence of modules. We check that the standard \tr e on $\dD^b(A)$ is glued along the exceptional sequence and that the left and the right dual sequences do not consist of pure $A$-modules.

Algebra $A$ is the path algebra of a quiver
\[
\xymatrix{& 3 \ar[dl]_{d}& \\ 1 \ar@<1ex>[rr]^{a} \ar@<-1ex>[rr]_{b} & & 2 \ar[ul]_{c} }
\]
with relations
\begin{align*}
&a d=0,& &cb= 0,& &dc = 0.&
\end{align*}
By \cite[Lemma 5.5]{Kalck}, algebra $A$ does not admit a quasi-hereditary structure.

We denote by $S_1$, $S_2$, $S_3$ simple left $A$-modules and $P_1$, $P_2$, $P_3$ their irreducible projective covers. By \cite[Lemma 5.3]{Kalck} sequence 
$$
\sigma  = \langle S_3, P_2, P_1 \rangle
$$ 
is exceptional and full in $\dD^b(A)$. Its DG endomorphism algebra is the path algebra of  
\[
\xymatrix{S_3 \ar@<-2ex>[r]|{\epsilon} \ar[r]|{\f} & P_2 \ar@<-2ex>[r]|{\alpha} \ar[r]|\beta & P_1}
\]
with $\alpha$, $\beta$, $\epsilon$ of degree zero, $\f$ 
of degree two and
\begin{align}\label{eqtn_relations}
&\beta \epsilon = 0,& &\alpha\f =0.&
\end{align}

\begin{LEM}
	Left and right dual sequences to $\sigma$ do not consist of $A$-modules.
\end{LEM}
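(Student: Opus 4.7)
The plan is to compute the critical mutations $F_2 = L_{S_3}P_2$ in the left dual sequence $(F_3, F_2, F_1)$ and $G_2 = R_{P_1}P_2$ in the right dual sequence $(G_1, G_2, G_3)$, and to observe that each has cohomology supported in two different cohomological degrees.

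First I would pin down $\Hom^\bcdot_A(S_3, P_2)$. This can be read off directly from the DG algebra description above: the generators $\epsilon$ (degree $0$) and $\f$ (degree $2$) show that $\Hom_A(S_3, P_2) = \kk$ and $\Ext_A^2(S_3, P_2) = \kk$, while all other Ext-groups vanish. The same can be verified by using the list of nonzero paths in $A$ to build the projective resolution $0 \to P_2 \to P_1 \to P_3 \to S_3 \to 0$ and applying $\Hom_A(-, P_2)$. The defining triangle of the left mutation then collapses to
$$
S_3 \oplus S_3[-2] \longrightarrow P_2 \longrightarrow F_2,
$$
whose first component $\epsilon\colon S_3 \hookrightarrow P_2$ embeds $S_3$ as the submodule $\kk c$ with quotient $S_2$, and whose second component realizes the nontrivial class $\f \in \Ext^2(S_3, P_2)$. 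The long exact sequence in cohomology then yields $H^0(F_2) \cong S_2$ and $H^1(F_2) \cong S_3$, so $F_2$ is a genuine two-term complex, not an $A$-module; this already settles the left dual claim.

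For the right dual, I would run the analogous computation for $G_2 = R_{P_1}P_2$. Since $P_2$ is projective, $\Hom^\bcdot_A(P_2, P_1)$ is concentrated in degree $0$ and equals $\kk^2$, with basis $\alpha, \beta$ corresponding to the arrows $a, b \colon 1 \to 2$. The mutation triangle then reads $G_2 \to P_2 \to P_1^{\oplus 2}$. On the $\kk$-basis $\{e_2, c\}$ of $P_2$ this map is $e_2 \mapsto (a, b)$ and $c \mapsto (ca, 0)$, where the second entry vanishes thanks to $cb = 0$. It is injective, but its image has dimension $2$ whereas $P_1^{\oplus 2}$ has dimension $8$. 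Hence $G_2$ is the $[-1]$-shift of the cone, with cohomology concentrated in degree $1$, and is therefore not an $A$-module either.

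The main technical obstacle is careful bookkeeping with the relations $ad = cb = dc = 0$ when pinning down $\Hom^\bcdot_A$ and the image of $P_2 \to P_1^{\oplus 2}$; everything else is mechanical. Conceptually, the mechanism is transparent: the nontrivial class $\f \in \Ext^2(S_3, P_2)$ — precisely the obstruction to $A$ being quasi-hereditary — pushes $F_2$ out of the heart on the left, while the non-surjectivity of $P_2 \to P_1^{\oplus 2}$ pushes $G_2$ out of the heart on the right. This matches the prediction of Theorem \ref{thm_main}: since $A$ carries no quasi-hereditary structure, neither dual of a full exceptional sequence of $A$-modules can itself consist of $A$-modules.
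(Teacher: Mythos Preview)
Your proof is correct and follows essentially the same approach as the paper: compute $L_{S_3}P_2$ via the triangle $S_3\oplus S_3[-2]\to P_2\to L_{S_3}P_2$ to see $H^0\cong S_2$, $H^1\cong S_3$, and compute $R_{P_1}P_2$ via $R_{P_1}P_2\to P_2\to P_1^{\oplus 2}$ using that the second map is not surjective. You go slightly further than the paper by verifying injectivity of $P_2\to P_1^{\oplus 2}$ (so that $R_{P_1}P_2$ is concentrated in degree~$1$), whereas the paper is content to note non-surjectivity gives $H^1(R_{P_1}P_2)\neq 0$; either suffices.
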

\begin{proof}
	We check that $L_{S_3}P_2$ and $R_{P_1}P_2$ are not pure.
	
	Equality $\Hom_A^\bcdot(S_3, P_2) =  \kk \oplus \kk[-2]$ implies that the left mutation of $P_2$ over $S_3$ fits into a distinguished triangle
	\begin{equation}\label{eqtn_L_S_3_P_2}
	S_3 \oplus S_3[-2] \xrightarrow{\left(\begin{array}{cc}\epsilon & \f 
		\end{array}\right)} P_2 \rightarrow L_{S_3}P_2\rightarrow S_3[1]\oplus S_3[-1].
	\end{equation}
	It follows that $H^0(L_{S_3}P_2)\simeq S_2$, $H^1(L_{S_3}(P_2)) \simeq S_3$.
	
	The space $\Hom_A(P_2, P_1)$ is two dimensional, hence the right mutation of $P_2$ over $P_1$ fits into a distinguished triangle
	\begin{align*}
	&R_{P_1}P_2 \to P_2 \to P_1^{\oplus 2} \to R_{P_1}P_2[1].&
	\end{align*}
	As the map $P_2 \to P_1^{\oplus 2} $ is not an epimorphism, $H^1(R_{P_1}P_2) \neq 0$.
\end{proof}

\begin{PROP}
	The standard \tr e on $\dD^b(A)$ is glued along filtration 
	$$
	\langle S_3 \rangle  \subset \langle S_3, P_2 \rangle \subset \langle S_3, P_2, P_1 \rangle
	$$
	from the standard \tr es on $\dD^b(k)$.
\end{PROP}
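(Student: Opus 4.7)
The plan is to compare the positive aisles of the two \tr es on $\dD^b(A)$ directly. By Lemma \ref{lem_glued_t-str}, the positive aisle of the \tr e glued along $(S_3, P_2, P_1)$ equals
\[
\dD^{\gge 0}_{\textrm{glu}} = \{D\in \dD^b(A)\,|\, \Hom_{\dD^b(A)}(E, D[l])=0, \text{ for } l<0, E\in \{S_3, P_2, P_1\}\}.
\]
On the other hand, since $P_1\oplus P_2\oplus P_3$ is a projective generator of $\modu A$, the positive aisle of the standard \tr e on $\dD^b(A)$ admits the parallel description
\[
\dD^{\gge 0}_{\textrm{std}} = \{D\in \dD^b(A)\,|\, \Hom_{\dD^b(A)}(P_s, D[l])=0, \text{ for } l<0, s=1,2,3\}.
\]
The conditions involving $P_1$ and $P_2$ coincide in the two descriptions. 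Hence the task reduces to showing that, assuming the vanishing of $\Hom(P_1, D[l])$ and $\Hom(P_2, D[l])$ for all $l<0$, one has $\Hom(S_3, D[l])=0$ for all $l<0$ if and only if $\Hom(P_3, D[l])=0$ for all $l<0$.

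For this, I would first verify that the relations $ad=cb=dc=0$ and the shape of the quiver force the minimal projective resolution of $S_3$ to be
\[
0 \to P_2 \to P_1 \to P_3 \to S_3 \to 0,
\]
which factors into two short exact sequences $0 \to P_2 \to P_1 \to M \to 0$ and $0 \to M \to P_3 \to S_3 \to 0$, with $M := \ker(P_3 \twoheadrightarrow S_3)$. Applying the long exact sequence of $\Hom(-, D)$ to the first short exact sequence and using the hypothesized vanishings for $P_1$, $P_2$ (in all negative degrees) yields $\Hom(M, D[l])=0$ for all $l<0$. Substituting this into the long exact sequence of the second short exact sequence gives the required isomorphism $\Hom(P_3, D[l]) \simeq \Hom(S_3, D[l])$ for $l<0$. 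Consequently $\dD^{\gge 0}_{\textrm{glu}}=\dD^{\gge 0}_{\textrm{std}}$, and the two \tr es agree since each is determined by its positive aisle.

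Note that Theorem \ref{thm_general}(1) does not apply directly here: a computation analogous to the one carried out for $L_{S_3}P_2$ in the previous lemma, using the degree-two generator $\f\in \Hom_{\dD^b(A)}^\bcdot(S_3, P_2)$ and the relation $\alpha\f=0$, shows that the iterated mutation $F_3 = L_{S_3}L_{P_2}P_1$ satisfies $H^{-1}(F_3) = S_3\neq 0$, so $F_3\notin \dD^{\gge 0}$. The main obstacle is therefore the explicit identification of the projective resolution of $S_3$; once this is in hand, the long exact sequence chase is entirely routine.
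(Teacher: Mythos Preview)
Your main argument is correct and is genuinely different from the paper's approach. The paper shows that the hearts coincide by checking that each simple module $S_i$ lies in both aisles of the glued \tr e; for the negative aisle this requires computing the entire left dual sequence $(M, L_{S_3}P_2, S_3)$ and then verifying, via the projective resolutions of $S_1$ and $S_2$, that $\Hom^\bcdot(S_i,F_j)\in \dD^b(\kk)^{\geq 0}$ for every pair $(i,j)$. Your route is much more economical: you compare only the positive aisles, and the projective resolution $0\to P_2\to P_1\to P_3\to S_3\to 0$ (which is indeed correct) reduces the comparison to a two-step long exact sequence chase. What your argument buys is brevity and the avoidance of any computation of the left dual collection; what the paper's argument buys is an explicit description of the costandard-type objects $H^0(F_j)$, which it uses afterwards.

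Your closing remark, however, is mistaken. You claim $H^{-1}(F_3)=S_3$, but in fact $F_3=M=L_{S_3}L_{P_2}P_1$ is concentrated in degree $0$: in the mutation triangle $S_3[-1]\oplus S_3[1]\to L_{P_2}P_1\to M$, the component $S_3[1]\to L_{P_2}P_1$ is (up to scalar) the truncation map $\iota$, which is an isomorphism on $H^{-1}$, so the cohomology long exact sequence gives $H^{-1}(M)=0$ and $H^0(M)$ the nontrivial extension of $S_3$ by $S_1$. (You may be confusing $F_3$ with $L_{P_2}P_1$, which does have $H^{-1}=S_3$.) Consequently all $F_i$ lie in $\dD^{\geq 0}$ and all $E_i\in \aA\subset \dD^{\leq 0}$, so Theorem~\ref{thm_general}(1) \emph{does} apply directly and gives an even shorter proof of the proposition. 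This does not affect the validity of your main argument, only the aside.
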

\begin{proof}
	We check that the heart $A\textrm{-mod}$ of the standard \tr e $(\dD^b(A)^{\leq 0}, \dD^b(A)^{\geq 1})$ is contained in the heart of the \tr e $(\dD^b(A)^{\leq_g 0}, \dD^b(A)^{\geq_g 1})$  glued along the filtration
	$$
	\langle S_3\rangle \subset \langle S_3, P_2 \rangle \subset \dD^b(A).
	$$
	It then follows that $\dD^b(A)^{\leq 0} = \dD^b(A)^{\leq_g0}$, i.e. the \tr es coincide.  Indeed, if $A\textrm{-mod}\subset \dD^b(A)^{\leq_g0}$, then also any object of $\dD^b(A)^{\leq 0}$, admitting a `filtration' with objects of $A\textrm{-mod}[l]$ for $l\geq 0$, belongs to $\dD^b(A)^{\leq_g 0}$. Analogously, $A\textrm{-mod}\subset \dD^b(A)^{\geq_g 0}$ implies that $\dD^b(A)^{\geq 1} \subset \dD^b(A)^{\geq_g 1}$. Hence, we have an inverse inclusion of the right orthogonal subcategories $\dD^b(A)^{\leq_g 0} \subset \dD^b(A)^{\leq 0}$. 
	
	We use the description of the aisles of the glued \tr e given by Lemma \ref{lem_glued_t-str} and check that the simple $A$-modules $S_1$, $S_2$, $S_3$ lie in their intersection. Then we conlcude that $A\textrm{-mod}$, i.e. the smallest subcategory of $\dD^b(A)$ containing $S_1$, $S_2$ and $S_3$, and closed under extensions, also belongs to $\dD^b(A)^{\leq_g0} \cap \dD^b(A)^{\geq_g 0}$ as needed.
	
	As $S_3$, $P_2$ and $P_1$ are $A$-modules, clearly $S_i \in \dD^b(A)^{\geq_g 0}$, for any $i$. 
	
The sequence left dual to $\sigma$ reads
\begin{equation} \label{eqtn_left_sigma}
\langle M, L_{S_3}P_2, S_3 \rangle,
\end{equation}
where $L_{S_3}P_2$ is defined by triangle \eqref{eqtn_L_S_3_P_2}. Object $M$ is defined in two steps; as $\Hom_A(P_2,P_1) = \kk^{2}$, we have an exact triangle
\begin{equation}\label{eqtn_L_P_2P_1}
	P_2 \oplus P_2 \xrightarrow{\left(\begin{array} {cc}\alpha & \beta \end{array}\right)} P_1 \to L_{P_2}P_1 \to P_2[1] \oplus P_2[1].
\end{equation}
Applying $\Hom_A(S_3,-)$ to it and using relations \eqref{eqtn_relations}, one gets that $\Hom_A(S_3, L_{P_2}P_1) = \kk[-1] \oplus \kk[1]$. Hence, object $M$ fits into an exact triangle
\begin{equation}\label{eqtn_M}
	S_3[-1] \oplus S_3[1] \to L_{P_2}P_1 \to M \to S_3 \oplus S_3[2].
\end{equation}
As the collection \eqref{eqtn_left_sigma} is exceptional, we have $S_3\in \dD^b(A)^{\leq_g 0}$.

To calculate morphisms from $S_1$ and $S_2$ to the objects of \eqref{eqtn_left_sigma}, we use their projective resolutions
\begin{align*}
	0 \to P_2 \to P_1 \to P_3\to P_2 \oplus P_2 \to P_1 \to S_1 \to 0,&\\
	0 \to P_2 \to P_1 \to P_3 \to P_2 \to S_2 \to 0.&
\end{align*}
 The resolution of $S_1$ yields $\Hom_A^{\bcdot}(S_1, S_3 )=\kk[-2]$ and $\Hom_A^{\bcdot}(S_1, P_2) \in \dD^b(\kk)^{\geq 1}$. Hence, triangle \eqref{eqtn_L_S_3_P_2} implies that $\Hom_A^{\bcdot}(S_1, L_{S_3}P_2) \in \dD^b(\kk)^{\geq 1}$. Further, $\Hom_{A}^{\bcdot}(S_1, P_1)\in \dD^b(\kk)^{\geq 1}$, hence $\Hom_A^{\bcdot}(S_1, L_{P_2}P_1) \in \dD^b(\kk)^{\geq 0}$. Applying $\Hom_A(S_1,-)$ to \eqref{eqtn_M} one gets that $\Hom_A^{\bcdot}(S_1,M) \in \dD^b(\kk)^{\geq 0}$. It follows that $S_1\in \dD^b(A)^{\leq_g 0}$.
 
 Analogously, the projective resolution of $S_2$ implies that $\Hom_A^{\bcdot}(S_2,S_3) = \kk[-1]$ and $\Hom_A^{\bcdot}(S_2,P_2) \in \dD^b(\kk)^{\geq 3}$. It follows that $\Hom_A^{\bcdot}(S_2, L_{S_3}P_2) \in \dD^b(\kk)^{\geq 0}$.
 
 To show that negative Ext-groups from $S_2$ to $M$ vanish we first note that triangle \eqref{eqtn_L_P_2P_1} implies that object $L_{P_2}P_1$ has two non-zero cohomology objects and fits into an exact triangle
 \begin{equation}\label{eqtn_coh_L_P_2P_1}
 	S_3[1] \xrightarrow{\iota} L_{P_2}P_1 \to S_1 \to S_3[2].
 \end{equation}
This triangle implies in particular, that $\Hom_A(S_3[1], L_{P_2}P_1)$ is spanned by the truncation $\iota \colon \tau^{\leq -1}L_{P_2}P_1\to L_{P_2}P_1$. By applying $\Hom_A(S_2, -)$ to \eqref{eqtn_coh_L_P_2P_1} we see that the morphism $\Hom_A(S_2, S_3[1]) \to \Hom_A(S_2, L_{P_2}P_1)$, induced by $\iota$, is an isomorphism. As $\iota$ is the component $S_3[1] \to L_{P_2}P_1$ in \eqref{eqtn_M}, the map $\Hom_A(S_2, S_3[-1] \oplus S_3[1]) \to \Hom_A(S_2, L_{P_2}P_1)$ in the long exact sequence obtained by applying $\Hom_A(S_2, -)$ to \eqref{eqtn_M}, is an isomorphism. It follows that $\Hom_A^{\bcdot}(S_2, M) \in \dD^b(\kk) ^{\geq 0}$, hence $S_2 \in \dD^b(A)^{\leq_g 0}$. 
\end{proof}

As the standard \tr e on $\dD^b(A)$ is glued, the category of modules over $A$ is a quasi-hereditary category in the sense of \cite{Bez1}. The standard objects are elements $S_3$, $P_2$, $P_1$ of $\sigma$, the costandard are $H^0(M)$, $H^0(L_{S_3}P_2)$ and $S_3$. Sequence \eqref{eqtn_M} implies that  $H^0(M)$ is the unique extension of $S_3$ by $S_1$. As discussed above,  $H^0(L_{S_3}P_2) \simeq S_2$. As $\Ext_A^2(S_3,S_2) =k$, the sequence 
$$
\tau =\langle H^0(M), S_2,S_3 \rangle
$$ 
is not exceptional.

\vspace{0.3cm}
\subsection{Directed algebras}~\\ \label{ssec_dir_alg}

Let $A$ be a path algebra of a directed quiver  with relations and $\aA$ the category of left $A$-modules. We assume that vertices of the quiver are labelled with $\{1,\ldots,n\}$ and for every arrow the index of the source is less than the index of the target.

Indecomposable projective objects $P_i$ in $\aA$ correspond to vertices $\{1,\ldots,n\}$ of the quiver. As the quiver is directed, $\Hom(P_i,P_j) = 0$ for $i>j$ and $\Hom(P_i,P_i)=\kk$. Hence, $(P_n,\ldots,P_1)$ is a full exceptional sequence in $\dD^b(\aA)$. 

Let $S_i$ be the simple $A$-module corresponding to the vertex $i$. As $\dim_{\kk} \Hom(P_i,S_j) = \delta_{ij}$, $(S_1,\ldots,S_n)$ is a full exceptional sequence left dual to $(P_n,\ldots,P_1)$. 

Finally, let $I_i$ denote the injective envelope of $S_i$, i.e. the indecomposable injective $A$-module corresponding to the vertex $i$. As $\dim \Hom(S_i,I_j) = \delta_{ij}$, $(I_n,\ldots,I_1)$ is a full exceptional sequence in $\dD^b(\aA)$ left dual to $(S_1,\ldots, S_n)$.

Theorem \ref{thm_general} implies that $\aA$ admits two structures of a highest weight category. In the first one $P_i$ are the  standard objects while $S_i$ are costandard. The induced partial order on the set $\{1,\ldots,n\}$ is inverse to the standard one. Equivalently, one can consider simple $A$-modules as standard objects and indecomposable injective objects as costandard. Then the corresponding partial order on the set $\{1,\ldots,n\}$ is the standard one.

\vspace{0.3cm}
\section{Geometric applications}

In this section, we apply Theorem \ref{thm_general} to proving highest weight structure on the category of perverse sheaves with the middle perversity on a complex-analytic space stratified by consecutive divisors with contractible open parts. 

Next, we consider the derived category of a Grassmannian in arbitrary characteristic and give  an alternative proof for the existence of a \tr e with the  heart admitting a highest weight structure \cite{BuchLeuVdB} \label{thm_Gr}\cite{Efi}. 

Finally, we prove that the abelian null category for any birational morphism of regular surfaces is highest weight. We provide an explicit geometric description for  standard, costandard and characteristic tilting objects. Also we show that the triangulated null category is equivalent to the derived category of the abelian null category in this case.

\vspace{0.3cm}
\subsection{Perverse sheaves on stratified complex analytic spaces}~\\
\label{perverses}
The theory of glued \tr es captures abstract algebraic properties of the theory of perverse sheaves. Let us illustrate how our approach is applicable to this classical geometric situation.

Let $X$ be a complex analytic space with an analytic stratification $\mathscr{S}$ satisfying the Whitney condition. We endow the set $\mathscr{S}$ of strata with the partial order, for which $S_1\leq S_2$ if and only if $S_1 $  is contained in the closure $ \overline{S_2}$ of $S_2$.

For a strata $S$, denote by $\dD(S)$ the derived category of sheaves with locally constant cohomology and by $j_S\colon S\to X$ the inclusion. 
Consider the derived category $\dD_{\mathscr{S}}(X)$ of sheaves on $X$ with $\mathscr{S}$-constructible cohomology. Let $U\subset X$ be an open strata. Denote by $\mathscr{S}'$ the induced stratification on the complex analytic space $X\setminus U$. Then $\dD_{\mathscr{S}}(X)$ fits into a recollement \cite{BBD}
\begin{equation}\label{eqtn_per_rec}
\xymatrix{\dD_{\mathscr{S}'}(X\setminus U) \ar[r]& \dD_{\mathscr{S}}(X) \ar[r] \ar@<2ex>[l]  \ar@<-2ex>[l] & \dD(U).\ar@<2ex>[l] \ar@<-2ex>[l]}
\end{equation}
Consider the recollements of the form \eqref{eqtn_per_rec} for complex analytic spaces obtained by consecutively removing open strata from $X$.

Consider the \tr e on $\dD_{\mathscr{S}}(X)$  which is recursively glued via the above recollements  from the standard \tr es on $\dD(S)$ shifted by $-\dim_{\mathbb{C}}(S)$ \cite{BBD}. Denote by $\textrm{Per}_{\mathscr{S}}(X)$ its heart, called the category of \emph{perverse sheaves with  middle perversity}  on $X$.

We further assume that 
\begin{itemize}
	\item[(S1)] all strata  $S\in \mathscr{S}$ are contractible and
	\item[(S2)] $\ol{S} \setminus S$ is either empty or a Cartier divisor in $\ol{S}$.
\end{itemize}
Here, by a Cartier divisor in a complex analytic space we mean a closed subset locally given as zeroes of a holomorphic function. 

For a contractible stratum $S$, we have $\dD(S) \simeq \dD^b(\kk)$. Then the only irreducible local system on $S$ is just the constant sheaf $\kk_S$. 
If condition (S1) is satisfied, the irreducible objects in $\textrm{Per}_{\mathscr{S}}(X)$ are intermediate extensions ${j_S}_{!*}\kk_S[-\dim_{\mathbb{C}} S] := \textrm{image}({}^p{j_S}_{!}\kk_S[-\dim_{\mathbb{C}} S]\to {}^p{j_S}_{*}\kk_S[-\dim_{\mathbb{C}} S])$ \cite{BBD}.



Our main theorem gives a different proof of the well-known fact that $\textrm{Per}_{\mathscr{S}}(X)$ is a highest weight category.

\begin{PROP}\cite{MirVil, ParSco}
	Let $X$ be complex analytic space and $\mathscr{S}$  a stratification satisfying (S1) and (S2). Consider the partial order $\Lambda$ on irreducible objects in  $\textrm{Per}_{\mathscr{S}}(X)$  induced by the partial order on  $\mathscr{S}$. Then $(\textrm{Per}_{\mathscr{S}}(X), \Lambda)$ 
	is a highest weight category. Moreover, $\dD^b(\textrm{Per}_{\mathscr{S}}(X)) \cong \dD_{\mathscr{S}}(X)$.
\end{PROP}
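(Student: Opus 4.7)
\noindent\textit{Proof proposal.} The plan is to apply Theorem \ref{thm_general}(2) to the full exceptional sequence naturally supplied by the recursive gluing construction of the perverse $t$-structure on $\dD_{\mathscr{S}}(X)$. Order the strata $S_1,\ldots,S_n\in \mathscr{S}$ by a total refinement of $\Lambda$, with smaller strata first, and carry out the recursion by peeling off open strata of the successive ambient spaces one at a time. The description $E_s=j_{s!}(\kk)$, $F_s=j_{s*}(\kk)$ in recollement \eqref{eqtn_recol_l}, combined with the transitivity of $(-)_!$ and $R(-)_*$ under composition of locally closed immersions, produces a full exceptional sequence $(E_1,\ldots,E_n)$ in $\dD_{\mathscr{S}}(X)$ with left dual $(F_n,\ldots,F_1)$, where
\[
E_i=(j_{S_i})_!\,\kk_{S_i}[-\dim_{\mathbb{C}} S_i],\qquad F_i=R(j_{S_i})_*\,\kk_{S_i}[-\dim_{\mathbb{C}} S_i].
\]
By construction, the middle-perversity $t$-structure on $\dD_{\mathscr{S}}(X)$ is precisely the $t$-structure glued along $(E_1,\ldots,E_n)$.

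The main task is to verify that each $E_i$ and each $F_i$ lies in $\textrm{Per}_{\mathscr{S}}(X)$; once this is established, Theorem \ref{thm_general}(2) directly supplies a highest weight structure on $\textrm{Per}_{\mathscr{S}}(X)$ with standard objects $\{E_i\}$ and costandard objects $\{F_i\}$. Condition (S1) gives $\dD(S_i)\simeq \dD^b(\kk)$, so that $\kk_{S_i}[-\dim_{\mathbb{C}} S_i]$ is well-defined and lies in the middle-perversity heart of $\dD(S_i)$. Condition (S2) is what makes the perverse $!/*$-extensions coincide with the underived ones: factor $j_{S_i}$ as the open immersion $S_i\hookrightarrow \ol{S_i}$ with Cartier-divisor complement, followed by the closed immersion $\iota\colon\ol{S_i}\hookrightarrow X$. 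The closed part is proper, so $\iota_*=\iota_!$ is $t$-exact for middle perversity; the open part is Stein (as the complement of a Cartier divisor in $\ol{S_i}$), so the corresponding $(-)_!$ and $R(-)_*$ are $t$-exact by the analytic analogue of Artin's affine vanishing theorem. Hence $E_i,F_i\in \textrm{Per}_{\mathscr{S}}(X)$. Since the chosen total order refines $\Lambda$, the induced partial order on standard objects coincides with $\Lambda$, and by Remark \ref{rem_hws_by_st} the resulting highest weight structure is independent of the choice of refinement.

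For the derived equivalence $\dD^b(\textrm{Per}_{\mathscr{S}}(X))\simeq \dD_{\mathscr{S}}(X)$, I invoke Theorem \ref{thm_univ_ext_tilt_gen}(2): the constructible derived category $\dD_{\mathscr{S}}(X)$ is idempotent complete and inherits a natural DG enhancement from complexes of sheaves on $X$, while the vanishing conditions of Theorem \ref{thm_univ_ext_tilt_gen}(1) are automatic once $E_i,F_i\in \textrm{Per}_{\mathscr{S}}(X)$.

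The principal obstacle is the perversity of the $E_i$ and $F_i$, i.e.\ the $t$-exactness of $(j_{S_i})_!$ and $R(j_{S_i})_*$ for middle perversity. This rests on (S2) via the Steinness of the open stratum inclusions; everything else is standard bookkeeping within the recollement formalism of \cite{BBD}, in particular the identification of iterated $!/*$-extensions through consecutive recollements with the direct $!/*$-extensions of the full immersions $j_{S_i}\colon S_i\to X$.
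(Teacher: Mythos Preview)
Your proposal is correct and follows essentially the same route as the paper: identify the dual pair of full exceptional sequences as the $!$- and $*$-extensions of the shifted constant sheaves on strata, verify they are perverse using condition (S2), and then invoke Theorems \ref{thm_general}(2) and \ref{thm_univ_ext_tilt_gen}(2). The only cosmetic difference is that the paper deduces $t$-exactness of $R(j_S)_*$ from that of $(j_S)_!$ via Verdier duality rather than arguing both directly from the affineness of the open inclusion $S\hookrightarrow \ol{S}$.
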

\begin{proof}
	Consider recollements of the form \eqref{eqtn_per_rec}.
	One proves by induction on the number of strata  that $\langle {j_S}_! \kk_S [-\dim_{\mathbb{C}}S] \rangle$ is a full exceptional sequence in $\dD_{\mathscr{S}}(X)$ whose left dual sequence is $\langle {j_S}_* \kk_S [-\dim_{\mathbb{C}}S] \rangle$ . 
	
	Since $\ol{S}\setminus S$ is Cartier, functor ${j_S}_!$ is $t$-exact for the shifted \tr e on $\dD(S)$ and the glued \tr e on $\dD_{\mathscr{S}}(X)$ \cite[Lemma 3.1]{MirVil}. As the \tr e is invariant under Verdier duality $\mathscr{D}_X\colon \dD_{\mathscr{S}}(X)^{\opp}\to \dD_{\mathscr{S}}(X)$ and $\mathscr{D}_X \circ {j_S}_!\cong {j_S}_*\circ \mathscr{D}_X$, functor ${j_S}_*$ is also $t$-exact for the same \tr es. 
	Hence, both sequences  $\langle {j_S}_!\kk_{S}[-\dim_{\mathbb{C}}S]\rangle$ and $\langle {j_S}_*\kk_{S}[-\dim_{\mathbb{C}}S]\rangle$ lie in $\textrm{Per}_{\mathscr{S}}(X)$.
	 Then, by Theorem \ref{thm_general}(2) category $\textrm{Per}_{\mathscr{S}}(X)$ is highest weight. As $\dD_{\mathscr{S}}(X)$ admits a DG enhancement, Theorem \ref{thm_univ_ext_tilt_gen}(2) provides an equivalence $\dD^b(\textrm{Per}_{\mathscr{S}}(X)) \cong \dD_{\mathscr{S}}(X)$.
\end{proof}

\vspace{0.3cm}

\subsection{Grassamannians in finite characteristic}~\\
\label{secGrass}
Let $\mathbb{G} = {\bf G}(l,F) = {\bf G}(l,m)$ be the Grassmannian of $l$-dimensional subspaces of an $m$ dimensional $\kk$-vector space $F$. Consider the tautological sequence of vector bundles
\[
0 \to \rR \to F^{\vee} \otimes_{\kk} \oO_{\mathbb{G}} \to \qQ \to 0
\]
with $\rR$ of rank $l$ and $\qQ$ of rank $m-l$. For a partition $\alpha$, let $S^\alpha$ be the associated Schur functor. Denote by $\alpha^T$ the transpose of $\alpha$ and by $|\alpha|$ its degree. Let $B_{u,v}$ be the set of partitions with at most $u$ rows and at most $v$ columns equipped with a total ordering $\prec$ such that if $|\alpha|<|\beta|$ then $\alpha \prec \beta$. Finally let $\ol{B}_{u,v}$ be $B_{u,v}$ with the inverse total ordering.

\begin{THM}\cite{Kap1, BuchLeuVdB, Efi}
	The category $\dD^b(\mathbb{G})$ admits a full exceptional sequence $\langle S^\alpha \qQ \rangle_{\alpha \in B_{l,m-l}}$ whose left dual sequence is $\langle S^{\alpha^T} \rR[-|\alpha|] \rangle_{\alpha \in \ol{B}_{l,m-l}}$.
\end{THM}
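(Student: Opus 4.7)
The plan is to establish the three assertions---exceptionality of $\{S^\alpha\qQ\}$, fullness of the collection, and identification of its left dual---via cohomological computations on $\mathbb{G}$, all controlled by Kempf's vanishing theorem (the characteristic-free substitute for Borel--Weil--Bott).

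I would first prove exceptionality by analysing $\Ext^\bullet(S^\beta\qQ,S^\alpha\qQ)=H^\bullet(\mathbb{G},S^\alpha\qQ\otimes S^\beta\qQ^\vee)$. Viewing $\mathbb{G}=GL(F)/P$ with $P$ the parabolic with Levi $GL(l)\times GL(m-l)$, the bundle $S^\alpha\qQ\otimes S^\beta\qQ^\vee$ admits a $P$-filtration whose Levi constituents are parametrized by Littlewood--Richardson data. Kempf vanishing on each piece, combined with the hypothesis that the chosen total order refines the degree order on partitions (so $|\alpha|<|\beta|$ implies $\alpha\prec\beta$), forces the desired vanishings and isolates the single copy of the trivial representation contributing $\End(S^\alpha\qQ)=\kk$.

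For fullness I would use the Beilinson--Kapranov resolution of the diagonal $\Delta\subset\mathbb{G}\times\mathbb{G}$. The diagonal is the zero locus of the regular section of $\rR_1^\vee\boxtimes\qQ_2$ induced by $\rR_1\hookrightarrow F^\vee\otimes\oO\twoheadrightarrow\qQ_2$; its Koszul complex, decomposed via the Cauchy formula
\[
\wedge^k(\rR_1\boxtimes\qQ_2^\vee)=\bigoplus_{|\alpha|=k}S^\alpha\rR_1\boxtimes S^{\alpha^T}\qQ_2^\vee,
\]
realizes $\oO_\Delta$ as an iterated extension of sheaves of the form $S^{\alpha^T}\rR_1\boxtimes S^\alpha\qQ_2^\vee$. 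Since Fourier--Mukai with kernel $\oO_\Delta$ is the identity, every object of $\dD^b(\mathbb{G})$ lies in the triangulated subcategory generated by $\{S^\alpha\qQ\}$, which proves fullness and simultaneously indicates the shape of the dual collection.

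For the left dual, I would apply the Hom-duality characterization \eqref{eqtn_dual_collec}: once fullness is established, the left dual is uniquely determined, so it suffices to verify
\[
\Hom_{\dD^b(\mathbb{G})}^\bullet\bigl(S^\alpha\qQ,\,S^{\beta^T}\rR[-|\beta|]\bigr)=\delta_{\alpha\beta}\delta_{\bullet,0}\,\kk.
\]
This reduces to the cohomology calculation $H^\bullet(\mathbb{G},S^\alpha\qQ^\vee\otimes S^{\beta^T}\rR)$, whose corresponding Levi weight becomes regular only when $\alpha=\beta$; Kempf vanishing together with the Weyl-length bookkeeping then yields a single $\kk$ in exactly the correct cohomological degree. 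The main obstacle is precisely this positive-characteristic control: the full Borel--Weil--Bott theorem and complete reducibility of Schur-functor decompositions are unavailable, so one must stratify each bundle via its parabolic filtration and invoke Kempf vanishing piece by piece, which is the technical heart of the Buchweitz--Leuschke--Van den Bergh and Efimov refinements of Kapranov's original argument.
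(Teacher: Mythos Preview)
The paper does not prove this theorem: it is stated with attribution to \cite{Kap1, BuchLeuVdB, Efi} and followed only by the sentence indicating that Kapranov handled characteristic zero, Buchweitz--Leuschke--Van den Bergh the arbitrary-field case, and Efimov the integral case. There is therefore nothing in the paper to compare your argument against; the authors take this result as input and use it in the proof of the \emph{next} theorem (Theorem~\ref{thm_Gr}).

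Your sketch is a faithful outline of the approach in the cited references, with the resolution of the diagonal via the Koszul complex of the tautological section and the Cauchy filtration playing the central role. One caution on the details: in positive characteristic the identities you invoke (Littlewood--Richardson decompositions, Cauchy formula) hold only as filtrations with the indicated associated graded pieces, not as direct-sum decompositions, and the dual $(S^\beta\qQ)^\vee$ is not literally $S^\beta(\qQ^\vee)$ but rather a Weyl functor; you acknowledge this at the end, but the earlier steps should be phrased accordingly. These are exactly the refinements carried out in \cite{BuchLeuVdB} and \cite{Efi}, so your plan is correct in spirit and points to the right sources.
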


The sequence was constructed by M. Kapranov \cite{Kap1} when the characteristic of $\kk$ is zero and by O. Buchweitz, G. Leuschke and M. Van den Bergh \cite{BuchLeuVdB} for an arbitrary field. A. Efimov \cite{Efi} generalised these results to the Grassmannian defined over $\mathbb{Z}$.

If ${\rm char}\kk =0$, the sequence $\langle S^\alpha \qQ \rangle_{\alpha \in B_{l,m-l}}$ is strong, in particular its endomorphism algebra is directed, hence quasi-hereditary, see Section \ref{ssec_dir_alg}. It follows that the highest weight category $\modu B$ is the heart of the glued \tr e in $\dD^b(\Coh(\mathbb{G}))$. When ${\rm char}\kk$ is small, the sequence is not necessarily strong. However, we have

\begin{THM}\cite{BuchLeuVdB} \label{thm_Gr}\cite{Efi}
	The heart $\aA$ of the \tr e glued along the exceptional sequence $\langle S^\alpha \qQ \rangle_{\alpha \in B_{l,m-l}}$ is highest weight and $\dD^b(\Coh(\mathbb{G})) \simeq \dD^b(\aA)$.
\end{THM}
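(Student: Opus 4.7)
The plan is to apply Theorem~\ref{thm_univ_ext_tilt_gen} directly. The preceding theorem provides a full exceptional sequence $\sigma = (S^\alpha\qQ)_{\alpha\in B_{l,m-l}}$ in $\dD^b(\mathbb{G})$ together with its left dual $\sigma^\vee = (S^{\alpha^T}\rR[-|\alpha|])_{\alpha\in \ol{B}_{l,m-l}}$, so the task reduces to checking the two Ext-vanishing conditions in part~(1) of Theorem~\ref{thm_univ_ext_tilt_gen}. Once they are verified, the highest weight structure on $\aA$ follows with standards $\{S^\alpha\qQ\}$ and costandards $\{S^{\alpha^T}\rR[-|\alpha|]\}$, and the derived equivalence $\dD^b(\mathbb{G})\cong \dD^b(\aA)$ follows from part~(2), since $\dD^b(\mathbb{G})$ admits a DG enhancement and is idempotent complete.

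The first vanishing condition is immediate: each $S^\alpha\qQ$ is a locally free sheaf on the smooth projective variety $\mathbb{G}$, and between vector bundles on a smooth variety all $\Ext^l_{\mathbb{G}}$ vanish for $l<0$; taking finite direct sums preserves this property.

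The substantive part is the second condition, concerning $F_\alpha := S^{\alpha^T}\rR[-|\alpha|]$. I would split the verification into cases according to the position of $\alpha$ relative to $\beta$ in the total order $\ol{B}_{l,m-l}$. If $\alpha$ comes strictly after $\beta$ in $\ol{B}$, then exceptionality of $\sigma^\vee$ already gives
\[
\Hom_{\dD^b(\mathbb{G})}(F_\alpha, F_\beta[l]) = 0
\]
for every integer~$l$. If $\alpha$ precedes $\beta$ in $\ol{B}$, then $|\alpha|\geq|\beta|$, and the cohomological shifts built into $F_\alpha$ and $F_\beta$, combined with the extra shift $[l]$ with $l<0$, place the relevant Ext-group between the vector bundles $S^{\alpha^T}\rR$ and $S^{\beta^T}\rR$ in strictly negative cohomological degree, where it vanishes automatically. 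The diagonal case $\alpha=\beta$ is covered by the exceptionality of $F_\alpha$ itself.

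The main subtlety I expect is bookkeeping: aligning the sign convention for the shift $[-|\alpha|]$ in the dual sequence with the direction of Hom-vanishing that exceptionality produces, so that the two cases above together cover every pair $(\alpha,\beta)$ with no gap. Once this is pinned down, no new input is needed: one direction is pure exceptionality of $\sigma^\vee$ supplied by \cite{BuchLeuVdB, Efi}, and the other is pure cohomological vanishing for vector bundles. With both conditions of Theorem~\ref{thm_univ_ext_tilt_gen}(1) in hand, that theorem immediately delivers the highest weight structure on $\aA$, and its part~(2) gives the equivalence $\dD^b(\mathbb{G})\cong \dD^b(\aA)$.
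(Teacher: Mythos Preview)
Your proposal is correct and follows the paper's own proof essentially verbatim: both reduce the claim to the two vanishing hypotheses of Theorem~\ref{thm_univ_ext_tilt_gen}, dispose of the first because the $S^\alpha\qQ$ are vector bundles, and handle the second by splitting into one case where exceptionality of the dual sequence gives vanishing for all $l$ and another where the degree shifts force the relevant $\Ext$ between the vector bundles $S^{\alpha^T}\rR$, $S^{\beta^T}\rR$ into negative degree. Your split by relative position of $\alpha,\beta$ in $\ol{B}$ is exactly the paper's split by the sign of $|\alpha|-|\beta|$, and the only thing left to pin down is, as you say, the sign bookkeeping matching the shift convention to the direction of the exceptionality vanishing.
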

\begin{proof}
As the category $\dD^b(\Coh(\mathbb{G}))$ is idempotent complete and DG enhanced it suffices to check that $\Hom_{\mathbb{G}}(\bigoplus S^\alpha \qQ, \bigoplus S^\alpha\qQ[l])$ and $\Hom_{\mathbb{G}}(\bigoplus S^{\alpha^T} \rR[-|\alpha|], \bigoplus S^{\alpha^T}\rR[l-|\alpha|])$ vanish for $l<0$. Indeed, if this is the case then the statement follows direclty from Theorem \ref{thm_univ_ext_tilt_gen}.

The vanishing of the first groups follows immediately from the fact that $S^\alpha \qQ$ are vector bundles on $\mathbb{G}$. Let us now consider the group $\Hom_{\mathbb{G}} (S^{\alpha^T} \rR[-|\alpha|], S^{\beta^T}\rR[l-|\beta|]) = \Hom_{\mathbb{G}}(S^{\alpha^T}\rR, S^{\beta^T}\rR[l+|\alpha| - |\beta|])$. If $|\alpha|-|\beta|\leq 0$ the group vanishes for any $l<0$, because $S^{\alpha^T} \rR$ and $S^{\beta^T} \rR$ are vector bundles on $\mathbb{G}$. If, on the other hand, $|\alpha| - |\beta|>0$, then the vanishing of the group for any $l$ follows from the relation $\beta \prec \alpha$ in $\ol{B}_{u,v}$.
\end{proof}

\begin{REM}
	In \cite{BuchLeuVdB} an explicit tilting object $T$ for $\dD^b(\Coh(\mathbb{G}))$ is constructed. The authors check that $\modu \End(T)$ is highest weight and it proved that  the equivalence $\Hom(T,-)\colon \dD^b(\Coh(\mathbb{G})) \to \dD^b(\modu \End(T))$ maps $S^\alpha \qQ$ to the standard objects. A different argument in \cite{Efi} proves that a statement analogous to Theorem \ref{thm_Gr} holds over the ring of integers.
\end{REM}

\begin{REM}

	A full exceptional sequence has also been constructed by \cite{KuzPol} on the Grassmannian ${\rm LGr}(n,V)$ of Lagrangian subspaces in a $2n$-dimensional vector space $V$ over a field $\kk$ of characteristic zero equipped with a non-degenerate symplectic form. The sequence consists of vector bundles, hence negative $\Ext$-groups between its elements vanish.  The sequence is built from $n+1$ strong blocks. Recently, the left dual sequences in these blocks were described by  A. Fonarev \cite{Fon2}. 
	It would be interesting to describe the left dual of the entire exceptional sequence and check if it satisfy the conditions of the criterion  in 
Theorem \ref{thm_univ_ext_tilt_gen} for the glued heart to be a highest weight category . 
\end{REM}

%

\vspace{0.3cm}
\subsection{Null categories for birational morphisms of regular surfaces}\label{birat-surf}~\\



Here we give an explicit description of an exceptional sequence and its dual in the null category for any birational morphism of regular surfaces.

Let $f\colon X\to Y$ be a  proper birational morphism of regular surfaces over $k$. We consider the \emph{triangulated null category} \cite{BodBon2}:
$$
\cC_f = \{E \in \dD^b(X)\,|\, Rf_*(E) =0\}.
$$

For $f$ as above we consider the partially ordered set $\Conn(f)$, see  \cite{BodBon2}. Elements of $\Conn(f)$ are pairs $(g,h)$ which provide decompositions $f = h \circ g$ of $f$ into proper birational morphisms $g$ and $h$ of regular surfaces such that the exceptional divisor of $g$ is connected. Given $(g_1,h_1)$, $(g_2, h_2)$ in $\Conn(f)$ we put $(g_1, h_1) \preceq (g_2, h_2)$ if $g_2$ factors via $g_1$.

Let $(g,h)$ be an element of $\Conn(f)$ with $g\colon X\to Z$. We denote by $R_g$ the fundamental cycle of $g$ and by $D_g$ its discrepancy divisor. If $z\in Z$ is the image of the exceptional divisor of $g$ and $\mathfrak{m}_g$ its maximal ideal sheaf, then $R_g$ and $D_g$ are divisors in $X$ defined by the equalities: 
\begin{align*}
&g^{-1}(\mathfrak{m}_g) = \oO_X(-R_g),& & g^!(\oO_Z) = \oO_X(D_g).&
\end{align*}

We denote by $\Conn(f)^{\opp}$ the opposite partial order on the set $\Conn(f)$.


\begin{THM}\label{thm_exc_coll_C_f}
	Category $\cC_f$ admits full exceptional sequence $\left(\oO_{R_g}(R_g)\right)_{g\in \Conn(f)}$ whose left dual sequence is $\left( \oO_{R_g}(D_g) \right)_{g\in \Conn(f)^{\opp}}$.
\end{THM}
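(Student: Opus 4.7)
The plan is to proceed by induction on the length of a blow-up factorization of $f$, using a decomposition $f = h \circ g_0$ with $g_0 \colon X \to X'$ a single blow-up of a smooth point, combined with the recollement $\cC_{g_0} \to \cC_f \to \cC_h$ of null categories established in \cite{BodBon2}.

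First, I would check that $\oO_{R_g}(R_g), \oO_{R_g}(D_g) \in \cC_f$ for each $g \in \Conn(f)$ with $f = h \circ g$ and $g \colon X \to Z$, which reduces to $Rg_*\oO_{R_g}(R_g) = 0 = Rg_*\oO_{R_g}(D_g)$. Applying $Rg_*$ to
\[
0 \to \oO_X \to \oO_X(R_g) \to \oO_{R_g}(R_g) \to 0,
\]
one uses $Rg_*\oO_X = \oO_Z$ (both $X$ and $Z$ being regular surfaces) and verifies $Rg_*\oO_X(R_g) = \oO_Z$: the sheaf $g_*\oO_X(R_g)$ is rank-one reflexive on the regular surface $Z$ and inherits the canonical section of $\oO_X \hookrightarrow \oO_X(R_g)$, hence is trivial; the vanishing $R^1g_*\oO_X(R_g) = 0$ reduces via Serre duality on the Gorenstein curve $R_g$ to $H^0(R_g, \oO_{R_g}(D_g)) = 0$, which is checked component-by-component using connectedness of the exceptional divisor of $g$ and the intersection data defining $R_g$ and $D_g$. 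The vanishing $Rg_*\oO_{R_g}(D_g) = 0$ follows dually via Grothendieck duality and the identification $g^!\oO_Z = \oO_X(D_g)$.

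The central step is to verify the Hom-duality
\[
\Hom_X^{\bcdot}\bigl(\oO_{R_g}(R_g), \oO_{R_{g'}}(D_{g'})\bigr) = \begin{cases} \kk, & g = g',\\ 0, & g \neq g', \end{cases}
\]
together with the negative $\Ext$-vanishings inside each sequence. The key local computation uses the length-one resolution $0 \to \oO_X(-R_g) \to \oO_X \to \oO_{R_g} \to 0$ to produce $R\hHom_X(\oO_{R_g}(R_g), \oO_X(D_g)) \cong \oO_{R_g}(D_g)[-1]$. Combined with Grothendieck duality relative to a common contraction target of $g$ and $g'$, this converts the cross Hom-groups into pushforwards of the type already controlled by the first step. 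The diagonal case $g = g'$ collapses to a direct calculation on $R_g$.

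Finally, fullness is proved by induction on the number of blow-ups. The base case is classical: $\cC_{g_0} = \langle \oO_{E_0}(-1) \rangle$ with $\oO_{E_0}(-1) = \oO_{R_{g_0}}(R_{g_0}) = \oO_{R_{g_0}}(D_{g_0})$. In the inductive step the recollement splits the exceptional sequence between the new contribution of $\cC_{g_0}$ and the pullback of the exceptional sequence from $\cC_h$, tracking how fundamental cycles and discrepancies transform under $Lg_0^*$. The main obstacle I anticipate is the Hom-duality across incomparable pairs $g, g' \in \Conn(f)$, where the exceptional loci share components but the cycles $R_g, R_{g'}$ and discrepancies $D_g, D_{g'}$ satisfy different numerical constraints relative to their respective contraction targets; coordinating these via the defining relations $g^{-1}\mathfrak{m}_g = \oO_X(-R_g)$ and $g^!\oO_Z = \oO_X(D_g)$, and checking that the expected orthogonality matches the reversed ordering on $\Conn(f)^{\opp}$ in the dual sequence, will require careful case analysis.
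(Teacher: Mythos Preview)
Your approach is workable but takes a more laborious route than the paper's. You induct by contracting a $(-1)$-curve on $X$ (peeling off a \emph{minimal} element $g_0\in\Conn(f)$) and plan to verify Hom-duality by direct Grothendieck-duality computations. The paper instead factors $f = h\circ g$ with $h\colon Z\to Y$ the blow-up of $Y$ at the image point $y$ of the (assumed connected) exceptional divisor, so that $g\colon X\to Z$ carries all the remaining complexity. This peels off the \emph{maximal} element $f\in\Conn(f)$, giving the clean decomposition $\Conn(f) = \Conn(g)\cup\{f\}$; the new objects arise immediately from the identities $Lg^*\oO_E(E) = \oO_{R_f}(R_f)$ and $g^!\oO_E(E) = \oO_{R_f}(D_f)$ (using $R_f = g^*E$ and $D_f = R_f + D_g$). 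Exceptionality, fullness, and duality then follow simultaneously from the pair of semi-orthogonal decompositions $\cC_f = \langle \cC_g, Lg^*\cC_h\rangle = \langle g^!\cC_h, \cC_g\rangle$, the inductive hypothesis on $\cC_g$, and the single diagonal check $\Hom_X(\oO_{R_f}(R_f), \oO_{R_f}(D_f)) = \kk$.

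The obstacle you anticipate---cross Hom-duality for incomparable $g,g'\in\Conn(f)$---simply does not arise in the paper's argument: the two semi-orthogonal decompositions encode all the required orthogonality structurally, so no case analysis on the relative positions of $R_g, R_{g'}, D_g, D_{g'}$ is ever needed. Your top-down induction can be made to work, but the bijection $\Conn(h)\leftrightarrow \Conn(f)\setminus\{g_0\}$ is less clean (it splits according to whether the blown-up point lies on $\Ex(g')$), and tracking $Lg_0^*$ and $g_0^!$ through it requires exactly the bookkeeping you flag as problematic.
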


\begin{proof}
We proceed by induction on the number $n$ of irreducible components in the exceptional divisor for $f$. If $n=1$ then $f$ is the blow-up of a smooth point. Then $(f, \textrm{Id})$ is the single element of $\Conn(f)$ and divisors $R_f$ and $D_f$ coincide. If this is the case, then the null category is equivalent to $\langle \oO_{R_f}(R_f) \rangle = \langle \oO_{D_f}(R_f) \rangle$.

Let now the exceptional divisor $\textrm{Ex}(f)$ of $f\colon X\to Y$ have $n$ irreducible components. Without loss of generality we can assume that $\textrm{Ex}(f)$ connected. Indeed, if this is not the case, let $g_1 \colon X\to Z_1$ and $g_2 \colon X\to Z_2$ be birational proper morphisms to regular surfaces such that $\textrm{Ex}(f) = \textrm{Ex}(g_1) \sqcup \textrm{Ex}(g_2)$. Then $\cC_f = \cC_{g_1} \oplus \cC_{g_2}$ and $\Conn(f)$ is a union of $\Conn(g_1)$ and $\Conn(g_2)$ such that any element of $\Conn(g_1)$ is not comparable with any element of $\Conn(g_2)$.

Let $y \in Y$ be the image of $\textrm{Ex}(f)$ and let $h\colon Z\to Y$ be the blow-up at $y$. Finally, let $g\colon X \to Z$ be such that $f = h \circ g$. Then $\Conn(f) = \Conn(g) \cup \{f\}$ and category $\cC_f$ admits semi-orthogonal decompositions
\begin{equation}
\cC_f = \langle \cC_g, Lg^* \cC_h \rangle = \langle g^! \cC_h, \cC_g \rangle.
\end{equation}
As $h$ is the blow-up at a smooth point, category $\cC_h$ is generated by $\oO_E(E)$, for the exceptional divisor $E$ of $h$. Since $Lg^* \oO_E(E) = \oO_{R_f}(R_f)$ and $g^! \oO_E(E) = \oO_{R_f}(D_f)$ the inductive hypothesis imply that $\left( \oO_{R_g}(R_g)\right)_{g\in \Conn(f)}$ and $\left( \oO_{R_g}(D_g) \right)_{g\in \Conn(f)^{\opp}}$ are exceptional sequences in $\cC_f$. As $\Hom_X(\oO_{R_g}(R_g), \oO_{R_g}(D_g)) = \kk$ the second one is indeed left dual to the first one.
\end{proof}
\begin{REM}
	For the case of rational surfaces, the exceptional sequence $\left(\oO_{R_g}(R_g)\right)_{g\in \Conn(f)}$ may be interpreted as a mutation of a part of the exceptional sequence considered by Hille and Perling in \cite{HilPer2}.
\end{REM}


Now we introduce the \emph{abelian null category} for $f\colon X\to Y$:
$$
\mathscr{A}_f = \{E\in \Coh(X)\,|\, Rf_*(E) =0\}.
$$

The exceptional divisor $\textrm{Ex}(f) = \bigcup C_i$ of $f$ is a tree of rational curves (see \cite[Theorem D.1]{BodBon} for a more general statement). By \cite[Theorem 7.13]{BodBon6}, $\{\oO_{C_i}(-1)\}_{C_i \in \textrm{Irr}(f)}$ is the set of isomorphism classes of irreducible objects in $\mathscr{A}_f$. Moreover, by \cite[Theorem 2.17]{BodBon2} the set of irreducible components of $\textrm{Ex}(f)$ is in bijection with $\Conn(f)$. Hence, the partial order on $\Conn(f)$ induces a partial order on the set of irreducible objects in $\mathscr{A}_f$. We denote it by $\Lambda$.



\begin{THM}\label{thm_surfaces}
	Consider a proper birational morphism $f\colon X\to Y$ of regular surfaces. Then the standard \tr e on $\dD^b(X)$ restricts to a \tr e on $\cC_f$ with heart $\mathscr{A}_f $.  $(\mathscr{A}_f,\Lambda)$ is a highest weight category with standard objects $\{\oO_{R_g}(R_g)\}_{g\in \Conn(f)}$ and costandard ones $\{\oO_{R_g}(D_g)\}_{g\in \Conn(f)}$. Moreover, $\cC_f\cong \dD^b(\mathscr{A}_f)$.
\end{THM}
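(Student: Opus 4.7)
The plan is to apply Theorem \ref{thm_general} and then Theorem \ref{thm_univ_ext_tilt_gen}(2) to the pair of dual exceptional sequences in $\cC_f$ supplied by Theorem \ref{thm_exc_coll_C_f}. First, for every $g \in \Conn(f)$ the divisors $R_g$ and $D_g$ are effective and supported on the exceptional locus of $g$, so $\oO_{R_g}(R_g)$ and $\oO_{R_g}(D_g)$ are genuine coherent sheaves on $X$, and therefore lie in $\Coh(X)$, the heart of the standard \tr e on $\dD := \dD^b(X)$. Setting $\aA = \Coh(X)$ and $\cC = \cC_f = \langle \oO_{R_g}(R_g) \rangle_{g \in \Conn(f)}$, the hypotheses of Theorem \ref{thm_general}(1) are met, so the standard \tr e on $\dD^b(X)$ restricts to the glued \tr e on $\cC_f$, whose heart is $\cC_f \cap \Coh(X) = \mathscr{A}_f$. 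Theorem \ref{thm_general}(2) then equips $\mathscr{A}_f$ with a highest weight structure whose standard and costandard objects are exactly those claimed.

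Next I would verify that the partial order on $\textrm{Irr}(\mathscr{A}_f)$ induced by the exceptional sequence $\epsilon = (\oO_{R_g}(R_g))_{g\in \Conn(f)}$ agrees with the intrinsic order $\Lambda$ defined above. Using the bijection of \cite[Theorem 2.17]{BodBon2} between $\Conn(f)$ and the irreducible components of $\Ex(f)$, one identifies, for each $g \in \Conn(f)$, the simple object $\oO_{C_g}(-1) \in \mathscr{A}_f$ as the unique simple quotient of the standard object $\oO_{R_g}(R_g)$; this fixes the correspondence between simples and standards. Under this identification the order on $\textrm{Irr}(\mathscr{A}_f)$ induced from $\epsilon$ coincides by construction with $\Lambda$, in accordance with Remark \ref{rem_hws_by_st}.

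Finally, for the derived equivalence $\cC_f \simeq \dD^b(\mathscr{A}_f)$ I would invoke Theorem \ref{thm_univ_ext_tilt_gen}(2). The category $\cC_f$ inherits a DG enhancement as a full triangulated subcategory of $\dD^b(X)$, and it is idempotent complete, being the kernel of the exact functor $Rf_*$ on the idempotent complete category $\dD^b(X)$ (if $X = Y \oplus Z$ in $\dD^b(X)$ has $Rf_*X = 0$, then $Rf_*Y \oplus Rf_*Z = 0$ forces $Y,Z \in \cC_f$). The vanishings of negative self-Ext groups within each of the two sequences are immediate since their members are ordinary coherent sheaves. The main obstacle I anticipate is not the machinery itself but the bookkeeping in the second step: carefully identifying the correct simple quotient of each $\oO_{R_g}(R_g)$ so as to align the orders on $\Conn(f)$ and $\textrm{Irr}(\mathscr{A}_f)$.
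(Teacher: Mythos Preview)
Your proposal is correct and follows essentially the same route as the paper: invoke Theorem~\ref{thm_exc_coll_C_f} to obtain the dual pair of exceptional sequences consisting of sheaves, apply Theorem~\ref{thm_general} to the standard \tr e on $\dD^b(X)$ to get the restricted \tr e on $\cC_f$ with heart $\mathscr{A}_f$ and its highest weight structure, and then use Theorem~\ref{thm_univ_ext_tilt_gen}(2) together with the DG enhancement and idempotent completeness of $\cC_f$ for the derived equivalence. The paper's proof is terser and does not spell out the order-matching step you highlight; since $\Lambda$ is by definition the order on $\textrm{Irr}(\mathscr{A}_f)$ transported from $\Conn(f)$ via the bijection of \cite[Theorem 2.17]{BodBon2}, and the exceptional sequence is indexed by $\Conn(f)$ with its given order, this point is essentially tautological once the simple top of $\oO_{R_g}(R_g)$ is identified with $\oO_{C_g}(-1)$, so your anticipated obstacle is minor.
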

\begin{proof}
	By Theorem \ref{thm_exc_coll_C_f}, category $\cC_f$ admits a full exceptional  sequence  $\left( \oO_{R_g}(R_g)\right)_{g\in \Conn(f)}$ of sheaves whose left dual sequence $\left(  \oO_{R_g}(D_{g}) \right)_{g\in \Conn(f)^{\opp}}$ also consists of sheaves. Hence, by Theorem \ref{thm_general}, the standard \tr e on $\dD^b(X)$, with heart $\Coh(X)$, restricts to a \tr e on $\cC_f$ with heart $\mathscr{A}_f = \cC_f\cap \Coh(X)$. Further, $(\mathscr{A}_f,\Lambda)$ is highest weight with the above standard and costandard objects. Finally, as $\cC_f$ is idempotent complete and admits a DG enhancement, Theorem \ref{thm_univ_ext_tilt_gen}(2) implies that $\cC_f\cong\dD^b(\mathscr{A}_f)$.
\end{proof}
 This theorem provides an interesting class of highest weight categories of geometric origin which deserves further investigation.

Note that the fact that, for a morphism $f\colon X\to Y$ between schemes, the standard \tr e on $\dD^b(X)$ restricts to a \tr e on $\cC_f$ when fibers of $f$ have dimension $\leq 1$ is proved in \cite{Br1}. Hence, $\mathscr{A}_f$ is abelian in this case.


We do not have an explicit description of projective or injective generators for $\mathscr{A}_f$. However, the characteristic tilting object can be written down explicitly.
\begin{THM}\label{thm_tilting_for_surf}
The sheaf
$$
T_f = \bigoplus_{g\in \textrm{Conn}(f)} \omega_X|_{D_g}
$$
is the characteristic tilting module for the highest weight category $(\mathscr{A}_f, \Lambda)$.
\end{THM}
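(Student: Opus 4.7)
The plan is to verify that each summand $\omega_X|_{D_g}$ is an indecomposable object of $\fF(\Delta)\cap\fF(\nabla)$, and that the $|\Conn(f)|$ summands are pairwise non-isomorphic. Since a highest weight category has exactly $|\Lambda|=|\Conn(f)|$ indecomposable tilting objects (one per simple), this forces $T_f$ to be the characteristic tilting module, with the label $g$ of each summand read off from its $\Delta$-filtration. Membership $\omega_X|_{D_g}\in \mathscr{A}_f$ follows from applying $Rf_*$ to $0\to \omega_X(-D_g)\to \omega_X\to \omega_X|_{D_g}\to 0$: using $g^!\oO_{Z_g}=\oO_X(D_g)$ to identify $\omega_X(-D_g)\cong g^*\omega_{Z_g}$, together with $Rg_*\oO_X=\oO_{Z_g}$ and Grauert--Riemenschneider for $h$, one obtains $Rf_*\omega_X(-D_g)\cong Rh_*\omega_{Z_g}\cong \omega_Y\cong Rf_*\omega_X$, the composite being the trace isomorphism.

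The geometric crux is the identity $D_g-R_g=\sum_i D_{g_i}$, where $\{g_i\}$ are the maximal elements of $\Conn(g)\setminus\{g\}$. Writing $g=h\circ g'$ with $h\colon Z\to Z_g$ the blowup at the image $z$ of $\textrm{Ex}(g)$, pulling back the maximal ideal $\mathfrak{m}_z$ yields $R_g=g'^{*}E_h$, while the chain rule for discrepancies gives $D_g=D_{g'}+g'^{*}E_h$, so $D_g-R_g=D_{g'}$. The divisor $\textrm{Ex}(g')=\textrm{Ex}(g)\setminus C_g$, with $C_g$ the unique component of $\textrm{Ex}(g)$ mapping dominantly to $E_h$, splits into disjoint connected sub-trees $T_i$ each contracted by some $g_i\in \Conn(g)$; by locality of the discrepancy, $D_{g'}=\sum_i D_{g_i}$.

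Combining this with the local triviality of $g^*\omega_{Z_g}$ and of $g^{-1}\mathfrak{m}_z$ on a neighbourhood of $\textrm{Ex}(g)$ (which give $\omega_X\cong \oO_X(D_g)$ and $\oO_X(R_g)\cong \oO_X$ there), tensoring with $\omega_X$ the two short exact sequences
\begin{align*}
	0 &\to \oO_{R_g}(-(D_g-R_g)) \to \oO_{D_g} \to \oO_{D_g-R_g} \to 0,\\
	0 &\to \oO_{D_g-R_g}(-R_g) \to \oO_{D_g} \to \oO_{R_g} \to 0
\end{align*}
yields respectively
\begin{align*}
	0 &\to \Delta_g \to \omega_X|_{D_g} \to \bigoplus_i \omega_X|_{D_{g_i}} \to 0,\\
	0 &\to \bigoplus_i \omega_X|_{D_{g_i}} \to \omega_X|_{D_g} \to \nabla_g \to 0.
\end{align*}
Induction on $|\Conn(g)|$, with base case a single blowup (where $\Delta_g=\nabla_g=\omega_X|_{D_g}=\oO_{C_g}(-1)$), then supplies both the $\Delta$- and $\nabla$-filtrations of $\omega_X|_{D_g}$.

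Finally, $\End(\omega_X|_{D_g})=H^0(D_g,\oO_{D_g})$ is a local Artinian $\kk$-algebra since $D_g$ is a connected proper scheme over $\kk$, so $\omega_X|_{D_g}$ is indecomposable; distinct $g$ yield distinct supports $\textrm{Ex}(g)\subset X$, so the summands are pairwise non-isomorphic. The main technical obstacle is the discrepancy identity $D_g-R_g=\sum_i D_{g_i}$ together with the identification $\oO_{D_g-R_g}(-R_g)\otimes\omega_X\cong \bigoplus_i \omega_X|_{D_{g_i}}$, both of which require careful bookkeeping with the poset $\Conn(g)$ and with the local triviality statements above.
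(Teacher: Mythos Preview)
Your approach is correct and substantially different from the paper's. The paper argues abstractly: it recalls (from \cite{BodBon5}) that the characteristic tilting object is the injective generator for the Ringel-dual \tr e, identifies the latter via Remark~\ref{rem_dual_filt} as the \tr e glued along the $\Dec(f)^{\opp}$-filtration $Lg^*\cC_h\subset\cC_f$, and then invokes \cite[Proposition~5.14]{BodBon2} to recognise this as the $T_f$-injective \tr e. Your route is direct and geometric: you exhibit explicit $\Delta$- and $\nabla$-filtrations on each $\omega_X|_{D_g}$ via the divisor identity $D_g-R_g=D_{g'}=\sum_i D_{g_i}$, check indecomposability via the local ring $H^0(D_g,\oO_{D_g})$, and conclude by counting. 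The paper's proof is shorter but leans on two external references; yours is self-contained and moreover produces the label-preserving identification $\omega_X|_{D_g}\cong T(g)$ together with the filtrations themselves.

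One point needs correcting. The assertion that $g^{-1}\mathfrak{m}_z=\oO_X(-R_g)$ is trivial on a neighbourhood of $\textrm{Ex}(g)$ is false already for a single blowup, where $\oO_X(-E)|_E\cong\oO_E(1)$. What your argument actually uses, and what is true, is the weaker statement that $\oO_X(R_g)$ restricts trivially to each $D_{g_i}$: since $R_g=g'^{*}E_h$ is pulled back along $g'$ and the support $T_i$ of $D_{g_i}$ is contracted by $g'$ to a single point of $Z$, the restriction $g'^{*}\oO_Z(E_h)|_{D_{g_i}}$ is trivial. This is precisely what yields $\omega_X\otimes\oO_{D_g-R_g}(-R_g)\cong\bigoplus_i\omega_X|_{D_{g_i}}$ in the second short exact sequence. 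With this adjustment the argument goes through. (The claim that the $g_i$ are exactly the maximal elements of $\Conn(g)\setminus\{g\}$ is true---any $g''\in\Conn(g)\setminus\{g\}$ factors $g'$ by the universal property of the blowup $h$, hence cannot contract $C_g$---but your proof only uses that each $g_i$ lies in $\Conn(g)$ with $|\Conn(g_i)|<|\Conn(g)|$.)
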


\begin{proof}
By \cite[Section 6.1]{BodBon5} the characteristic tilting object is the injective generator for the heart of the \tr e on $\cC_f$ glued along the exceptional sequence right dual to the sequence $\langle \oO_{R_g}(R_g) \rangle_{g\in \Conn(f)}$ of standard objects (see Theorem \ref{thm_surfaces}). 

It follows from Remark \ref{rem_dual_filt} and Theorem \ref{thm_surfaces}  that the Ringel dual \tr e on $\cC_f$ is glued along the $\Dec(f)^{\opp}$-filtration
$$
Lg^* \cC_h \subset \cC_f, 
$$
i.e. for any decomposition $f= h \circ g$ into proper birational morphisms of regular surfaces, the functor $Lg^* \colon \cC_h \to \cC_f$ is $t$-exact for the Ringel dual \tr e on $\cC_h$ and $\cC_f$, see \cite{BodBon2}.

%
%

Then it follows from \cite[Proposition 5.14]{BodBon2} by induction on the number of standard objects in $\cC_f$ that the Ringel dual \tr e on $\cC_f$ is the $T_f$-injective \tr e. The statement follows.  
\end{proof}

If $Y$ is a singular surface and $f\colon X\to Y$ its resolution then $\mathscr{A}_f$ is not necessarily a highest weight category and $\cC_f$ does not have to be equivalent to $\dD^b(\mathscr{A}_f)$ as the following examples show.

\begin{EXM}\label{typeA2}
Consider the minimal resolution $f\colon X\to Y$ of type $A_2$ surface singularity. The abelian null category $\mathscr{A}_f$ is equivalent to the category of finite dimensional modules over the quiver
\[
\xymatrix{ 1 \ar@<1ex>[r]^a & 2 \ar@<1ex>[l]^b}
\]
with relations $ab =0 $, $ba=0$. Hence, $\mathscr{A}_f$ is of infinite global dimension while any highest weight category is of finite global dimension \cite{CPS}.
Therefore, $\mathscr{A}_f$ is not highest weight for any partial order on the set of simple modules.
\end{EXM}
\begin{EXM}\label{typeA1}
Let now $f\colon X\to Y$ be the minimal resolution of type $A_1$ surface singularity. If $E\subset X$ is the exceptional divisor of $f$, then $\oO_E(-1)$ is an additive generator for the category $\mathscr{A}_f$, equivalent to the category of finite dimensional $k$-vector spaces. However, $\oO_E(-1)$ is a 2-spherical object. In particular,  $\Ext^2_X(\oO_E(-1), \oO_E(-1)) \cong \kk$. Hence,  $\dD^b(\mathscr{A}_f)$ is not equivalent to $\cC_f$. 
\end{EXM}
In higher dimension, consider a flopping contraction $f\colon X\to Y$ with following conditions: dimension of fibers is bounded by 1, the exceptional locus is of codimension in $X$ greater than 1, both $X$ and $Y$ Gorenstein, and $Y$ with canonical hypersurface singularities of multiplicity two. In this case also $\dD^b(\mathscr{A}_f)\ncong \cC_f$. Moreover, the derived functor $\dD^b(\mathscr{A}_f) \to \dD^b(X)$ of the inclusion $\mathscr{A}_f\subset \Coh(X)$ is proved to be spherical \cite{BodBon}.

	\bibliographystyle{alpha}
		\bibliography{../ref}

\end{document}